\numberwithin{equation}{section}
\numberwithin{figure}{section}
\theoremstyle{plain}
\newtheorem{thm}{\protect\theoremname}[section]
  \theoremstyle{definition}
  \newtheorem{defn}[thm]{\protect\definitionname}
  \theoremstyle{remark}
  \newtheorem{rem}[thm]{\protect\remarkname}
  \theoremstyle{remark}
  \newtheorem{claim}[thm]{\protect\claimname}
  \theoremstyle{plain}
  \newtheorem{lem}[thm]{\protect\lemmaname}
  \theoremstyle{plain}
  \newtheorem{cor}[thm]{\protect\corollaryname}
  \theoremstyle{plain}
  \newtheorem{prop}[thm]{\protect\propositionname}
  \providecommand{\claimname}{Claim}
  \providecommand{\corollaryname}{Corollary}
  \providecommand{\definitionname}{Definition}
  \providecommand{\lemmaname}{Lemma}
  \providecommand{\propositionname}{Proposition}
  \providecommand{\remarkname}{Remark}
\providecommand{\theoremname}{Theorem}
\begin{document}

\title{A dimension gap for continued fractions with independent digits -
the non stationary case}

\author{Ariel Rapaport}

\date{March 9, 2017}

\subjclass[2000]{Primary: 11K55, Secondary: 37C45}

\keywords{Continued fractions, Hausdorff dimension.}

\thanks{Supported by ERC grant 306494}
\begin{abstract}
We show there exists a constant $0<c_{0}<1$ such that the dimension
of every measure on $[0,1]$, which makes the digits in the continued
fraction expansion independent, is at most $1-c_{0}$. This extends
a result of Kifer, Peres and Weiss from 2001, which established this
under the additional assumption of stationarity. For $k\ge1$ we prove
an analogues statement for measures under which the digits form a
$*$-mixing $k$-step Markov chain. This is also generalized to the
case of $f$-expansions. In addition, we construct for each $k$ a
measure, which makes the continued fraction digits a stationary and
$*$-mixing $k$-step Markov chain, with dimension at least $1-2^{3-k}$.
\end{abstract}

\maketitle

\section{Introduction}

Let $X$ denote the set of irrational numbers in $(0,1)$. It is well
known each $x\in X$ has a unique continued fraction expansion of
the form
\[
x=\frac{1}{A_{1}(x)+\frac{1}{A_{2}(x)+\frac{1}{A_{3}(x)+\cdots}}}\:,
\]
where $A_{1}(x),A_{2}(x),...$ are positive integers. Given a probability
measure $\nu$ on $X$, each $A_{n}$ defines a random variable on
$(X,\nu)$ and the digits $\{A_{n}\}_{n=1}^{\infty}$ form a discrete
time stochastic process.

In 1966, Chatterji \cite{C} has shown every probability measure $\nu$
on $[0,1]$, which makes the digits in the continued fraction expansion
independent variables, is singular with respect to the Lebesgue measure.
In 2001, Kifer, Peres and Weiss \cite{KPW} have proven that $\dim_{H}\nu\le1-c$,
if in addition the digits are identically distributed. Here $0<c<1$
is a global constant, independent of $\nu$, and $\dim_{H}\nu$ denotes
the Hausdorff dimension of $\nu$, which is defined in Section \ref{S2}
below. In this paper we show the result from \cite{KPW} remains true,
even if the digits are independent but not necessarily identically
distributed.

Assuming $A_{1},A_{2},...$ are i.i.d. with $\mathbb{E}[\log A_{1}]<\infty$
and $H(A_{1})<\infty$, where $H(A_{1})$ is the entropy of $A_{1}$,
Kinney and Pitcher \cite{KP} have proven that 
\begin{equation}
\dim_{H}\nu=\frac{H(A_{1})}{-\int_{0}^{1}\log x^{2}\:d\nu}\:.\label{E100}
\end{equation}
The Gauss measure
\[
\mu_{G}(E)=\frac{1}{\log2}\int_{E}\frac{dx}{1+x}
\]
is the unique equilibrium state of the Gauss map $Tx=\frac{1}{x}\:\left(\mathrm{mod}\:1\right)$
with respect to the function $x\rightarrow\log x^{2}$. This follows
from the thermodynamic formalism approach of Walters \cite{W}. Hence
under the i.i.d. assumption
\[
0=\int_{0}^{1}\log x^{2}\:d\mu_{G}(x)+h_{\mu_{G}}(T)>\int_{0}^{1}\log x^{2}\:d\nu(x)+h_{\nu}(T),
\]
where $h_{\eta}(T)$ is the entropy of $T$ with respect to a $T$-invariant
measure $\eta$. Since $h_{\nu}(T)=H(A_{1})$, we get from (\ref{E100})
that $\dim_{H}\nu<1$ in this case. When $A_{1},A_{2},...$ are not
identically distributed the formula (\ref{E100}) is no longer valid,
and so it is not even clear that $\dim_{H}\nu$ is strictly less than
$1$. As mentioned above, we shall show that there exists a global
constant $c_{0}>0$ such that $\dim_{H}\nu\le1-c_{0}$, assuming $A_{1},A_{2},...$
are independent.

We actually prove more generally that for every integer $k\ge0$ there
exists $0<c_{k}<1$, which depends only on $k$, such that $\dim_{H}\nu\le1-c_{k}$
if the digits form a $k$-step Markov chain which is $*$-mixing.
This is the main result of this paper. The $*$-mixing condition was
introduced in \cite{BHK}, and is a bit less restrictive than the
more familiar $\psi$-mixing condition. The definitions are given
in Section \ref{S2}. In the last section we generalize our main result
to the case of $f$-expansions.

Given $k\ge0$ it was shown in \cite{KPW} that there exists $0<c_{k}'<1$,
for which $\dim_{H}\nu\le1-c_{k}'$ whenever $\nu$ makes the digits
a stationary and ergodic $k$-step Markov chain. Our proof is a modification
of the argument given there for this result. We shall also construct
for each $k$ a measure $\nu_{k}$, under which the digits form a
stationary and $\psi$-mixing $k$-step Markov chain, with $\dim_{H}\nu_{k}\ge1-2^{3-k}$.
This of course shows $c_{k}\mbox{ and }c_{k}'$ are at most $2^{3-k}$.

The rest of the paper is organized as follows. In Section \ref{S2}
we give some necessary definitions and state our results. In Section
\ref{S3} we establish a uniform bound on the dimension of subsets
of $X$, which are defined via certain digit frequencies. This is
the key ingredient in the proof of our main result, which is carried
out in Section \ref{S4}. In Section \ref{S5} we construct the measures
$\nu_{k}$ mentioned above. In Section \ref{S6} we generalize our
main result to the setup of $f$-expansions.$\newline$

\textbf{Acknowledgement. }This paper is a part of the author's PhD
thesis conducted at the Hebrew University of Jerusalem. I would like
to thank my advisor Professor Yuri Kifer, for suggesting to me the
problem studied in this paper, and for many helpful discussions.

\section{\label{S2}Preliminaries and results}

First, we define the mixing conditions mentioned above. Given random
variables $\{A_{i}\}_{i\in I}$, all defined on the same probability
space, denote by $\sigma\{A_{i}\}_{i\in I}$ the smallest $\sigma$-algebra
with respect to which each $A_{i}$ is measurable.
\begin{defn}
A sequence of random variables $\{A_{n}\}_{n=1}^{\infty}$ is called
$*$-mixing if there exist an integer $N\ge1$ and a real valued function
$f$, defined on the integers $n\ge N$, such that
\begin{itemize}
\item $f$ is non-increasing with $\underset{n}{\lim}\:f(n)=0$, and
\item if $n\ge N$, $m\ge1$, $E\in\sigma\{A_{1},...,A_{m}\}$ and $F\in\sigma\{A_{m+n},A_{m+n+1},...\}$
then
\[
\left|\mathbb{P}(E\cap F)-\mathbb{P}(E)\mathbb{P}(F)\right|\le f(n)\mathbb{P}(E)\mathbb{P}(F)\:.
\]

\end{itemize}
If such an $f$ exists for $N=1$ the sequence is said to be $\psi$-mixing.
\end{defn}

\begin{rem}
\label{R2.2}A sequence of independent random variables is clearly
$\psi$-mixing. It is not hard to show that the $\psi$-mixing condition
is satisfied for a finite state Markov chain $\{A_{n}\}_{n=1}^{\infty}$,
with state space $S$, for which
\[
\inf\{\mathbb{P}(A_{n+1}=j\mid A_{n}=i)\::\:n\ge1\mbox{ and }i,j\in S\}>0\:.
\]
Examples of $*$-mixing countable state Markov chains can be found
in Section 3 of \cite{BHK}. Another important example of a $\psi$-mixing
sequence is obtained by the continued fraction digits with respect
to the Gauss measure $\mu_{G}$ (see \cite{A} or \cite{H}).
\end{rem}

Set $X=(0,1)\setminus\mathbb{Q}$ and for each $x\in X$ and $i\ge1$
let $\alpha_{i}(x)\in\mathbb{N}:=\{1,2,...\}$ be the $i$'th digit
in the continued fraction expansion of $x$, i.e.
\[
x=\frac{1}{\alpha_{1}(x)+\frac{1}{\alpha_{2}(x)+\frac{1}{\alpha_{3}(x)+\cdots}}}\:.
\]
Given $a_{1},a_{2},...\in\mathbb{N}$ denote by $[a_{1},a_{2},...]$
the unique $x\in X$ with $\alpha_{i}(x)=a_{i}$ for $i\ge1$. For
$E\subset X$ write $\dim_{H}(E)$ for the Hausdorff dimension of
$E$. Given a Borel probability measure $\nu$ on $X$ its Hausdorff
dimension is defined by
\[
\dim_{H}(\nu)=\inf\{\dim_{H}(E)\::\:E\subset X\mbox{ is a Borel set with }\nu(E)=1\}\:.
\]
The following theorem is our main result.
\begin{thm}
\label{T3}Let $\{A_{n}\}_{n=1}^{\infty}$ be $\mathbb{N}$-valued
random variables and let $k\ge0$. Assume $\{A_{n}\}_{n=1}^{\infty}$
is a $k$-step Markov chain (when $k=0$ this means $A_{1},A_{2},...$
are independent) which is $*$-mixing. Let $\nu$ be the distribution
of the random variable $[A_{1},A_{2},...]$. Then $\dim_{H}(\nu)\le1-c_{k}$,
where $0<c_{k}<1$ is a constant depending only on $k$.
\end{thm}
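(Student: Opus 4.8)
The plan is to bound the Hausdorff dimension of $\nu$ by exhibiting a full-measure set $E\subset X$ whose dimension is uniformly bounded below $1$. The natural quantity to control is the local dimension $\liminf_{n}\frac{\log\nu(\text{cylinder of depth }n)}{\log|\text{cylinder}|}$, since by the Billingsley/mass-distribution lemma it suffices to find $E$ with $\nu(E)=1$ on which this ratio is $\le 1-c_k$. The length of the rank-$n$ cylinder $[a_1,\dots,a_n]$ is comparable to $q_n^{-2}$, where $q_n$ is the denominator of the continued-fraction convergent, and $\log q_n\asymp\sum_{i=1}^n\log a_i$ up to bounded multiplicative error. So the denominator of the dimension ratio is essentially $2\sum_{i=1}^n\log A_i$, while the numerator is $-\log\nu[A_1,\dots,A_n]$, an entropy-type sum. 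The idea is therefore to show that on a full-measure set the ``entropy per step'' cannot keep pace with ``twice the Lyapunov sum'' unless a definite deficit $c_k$ accumulates.

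First I would reduce to the frequency statistics of the digits. The key structural input is the uniform dimension bound for the level sets defined by digit frequencies, established in Section \ref{S3}: for sets of points in $X$ whose empirical digit distributions are controlled, one gets a dimension strictly below $1$ with a gap depending only on $k$. Concretely, I expect Section \ref{S3} provides, for each candidate one-step (or $k$-step) frequency vector $\mathbf{p}$, a bound of the form $\dim_H(\text{points realizing }\mathbf{p})\le F(\mathbf{p})$ where $\sup_{\mathbf{p}}F(\mathbf{p})\le 1-c_k$ uniformly. The job of the proof here is then purely probabilistic: to show that $\nu$-almost every $x$ has empirical $k$-block frequencies that fall into the regime where that uniform bound applies.

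Next I would exploit the $*$-mixing hypothesis to get the ergodic-type behaviour that replaces stationarity. Under stationarity (the \cite{KPW} case) one simply invokes the ergodic theorem to pin down the limiting frequencies; here the digits are neither stationary nor identically distributed, so instead I would use $*$-mixing to obtain a strong law of large numbers for the block-count statistics. The mechanism is that the bound $\left|\mathbb{P}(E\cap F)-\mathbb{P}(E)\mathbb{P}(F)\right|\le f(n)\mathbb{P}(E)\mathbb{P}(F)$ with $f(n)\to 0$ gives enough asymptotic independence to control variances of the partial sums $\frac{1}{n}\sum_{i=1}^n g(A_i,\dots,A_{i+k})$, yielding almost-sure convergence (or at least concentration) via a Borel--Cantelli argument along a subsequence, then filling gaps by monotonicity. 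This lets me assert that $\nu$-a.e.\ $x$ lands in a set whose dimension Section \ref{S3} bounds by $1-c_k$.

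The main obstacle, and the place where the non-stationary case genuinely differs from \cite{KPW}, is that the empirical frequencies need not converge to a single limit $\mathbf{p}$: along different windows the digit distribution can drift, so the relevant frequency vectors form a whole range rather than a point. The uniform bound from Section \ref{S3} is precisely what is designed to absorb this, but I must verify that the bound is stable under taking $\liminf$ and under the oscillation of the empirical measures, i.e.\ that no adversarial drift of the digit law can push the dimension ratio up to $1$. I would handle this by working with a compactness/subadditivity argument on the space of possible limiting empirical measures and checking that the deficit $c_k$ survives in the worst case; controlling the error term $f(n)$ uniformly in the starting index $m$ (which the $*$-mixing definition supplies, since the bound is uniform in $m$) is what makes this drift argument go through without a stationarity assumption.
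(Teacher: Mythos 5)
Your proposal has a genuine gap, and it sits exactly at the heart of the matter. You describe Section \ref{S3} as giving, ``for each candidate frequency vector $\mathbf{p}$,'' a bound $\dim_H(\text{points realizing }\mathbf{p})\le F(\mathbf{p})$ with $\sup_{\mathbf{p}}F(\mathbf{p})\le 1-c_k$. No such statement can hold: if $\mathbf{p}$ is the vector of Gauss frequencies $\{\mu_G(I_{\mathbf{a}})\}$, the set of points realizing it has full $\mu_G$-measure and hence dimension $1$. What Theorem \ref{T6} actually bounds is the dimension of sets $\Gamma_{q,\mathbf{a}}^{\delta}$ of points whose empirical frequency of a \emph{single} word $\mathbf{a}$, sampled along a sequence $q\in\mathcal{Q}_L$, stays at distance $>\delta$ from the Gauss value $\mu_G(I_{\mathbf{a}})$. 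Consequently, the entire burden of the proof of Theorem \ref{T3} is to show that $\nu$-a.e.\ point \emph{does} exhibit such a deviation from the Gauss statistics, and your proposal supplies no mechanism for this. The law of large numbers for $*$-mixing sequences (which is indeed the right probabilistic tool) only says that the empirical frequencies track the averages $\frac{1}{n}\sum_{i}p_{\mathbf{d},q(i)}$ of the marginal block probabilities; if those marginals happened to agree with the Gauss values on all blocks, the LLN would place the empirical frequencies exactly at the Gauss statistics and Theorem \ref{T6} would yield nothing.

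The missing idea is the structural fact that the continued fraction digits under $\mu_G$ do \emph{not} form a $k$-step Markov chain (observation 2.2 of \cite{KPW}). This produces words $\mathbf{a}\in\mathbb{N}^{k}$, $\mathbf{b}\in\mathbb{N}^{m}$, $c\in\mathbb{N}$ with
\[
\delta:=\left|\mu_{G}(I_{\mathbf{ba}c})-\frac{\mu_{G}(I_{\mathbf{a}c})\,\mu_{G}(I_{\mathbf{ba}})}{\mu_{G}(I_{\mathbf{a}})}\right|>0,
\]
and the paper's proof is a dichotomy built on it. Either some relevant block marginal $p_{\mathbf{d},i}$ differs from $\mu_G(I_{\mathbf{d}})$ by $\epsilon$ on a set of indices of upper density $>\frac{1}{10}$ --- then one extracts $q\in\mathcal{Q}_{10}$ along which this persists, and LLN plus Theorem \ref{T6} give the gap. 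Or else the marginals of $\mathbf{ba}$, $\mathbf{a}c$, $\mathbf{a}$ are all $\epsilon$-close to their Gauss values along a density $>\frac{1}{10}$ set of indices; then the $k$-step Markov identity $p_{\mathbf{ba}c,i}=p_{\mathbf{ba},i}\,p_{\mathbf{a}c,i+m}/p_{\mathbf{a},i+m}$ forces $p_{\mathbf{ba}c,i}$ to be $\delta/2$-far from $\mu_G(I_{\mathbf{ba}c})$, precisely because $\mu_G$ violates that identity by $\delta$. So in either horn the marginals deviate, the LLN transfers the deviation to empirical frequencies, and Theorem \ref{T6} applies. Your worry about drifting empirical measures is legitimate but is resolved by this extraction of $q\in\mathcal{Q}_{10}$ together with the $\liminf$ form of $\Gamma_{q,\mathbf{a}}^{\delta}$, not by a compactness argument over limiting empirical measures; and without the non-Markovianity of $\mu_G$, no amount of drift control can produce the dimension gap.
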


\begin{rem}
As mentioned in the introduction, it was shown in \cite{KPW} that
there exists $0<c_{k}'<1$, for which $\dim_{H}\nu\le1-c_{k}'$ whenever
$\nu$ makes the continued fraction digits a stationary and ergodic
$k$-step Markov chain.
\end{rem}

It might be desirable to estimate $c_{k}$ and $c_{k}'$. The next
claim shows these constants are at most $2^{3-k}$.
\begin{claim}
\label{C5}For each $k\ge3$ there exits an $\mathbb{N}$-valued $k$-step
stationary and $\psi$-mixing Markov chain $\{A_{n}\}_{n=1}^{\infty}$
with $\dim_{H}(\nu)\ge1-2^{3-k}$, where $\nu$ is the distribution
of $[A_{1},A_{2},...]$.
\end{claim}

The main ingredient in the proof of Theorem \ref{T3} is Theorem \ref{T6}
stated below, for which we need some more notations. Let $T:X\rightarrow X$
be the Gauss map, which is defined by 
\[
Tx=\frac{1}{x}\:\left(\mathrm{mod}\:1\right)\mbox{ for }x\in X\:.
\]
Denote by $\mu_{G}$ the Gauss measure, which satisfies
\[
\mu_{G}(E)=\frac{1}{\log2}\int_{E}\frac{dx}{1+x}\mbox{ for every Borel set }E\subset X\:.
\]
It is well known that $\mu_{G}$ is invariant and ergodic with respect
to $T$. For $(a_{1},...,a_{k})=\mathbf{a}\in\mathbb{N}^{k}$ set
\[
I_{\mathbf{a}}=\{x\in X\::\:\alpha_{i}(x)=a_{i}\mbox{ for each }1\le i\le k\},
\]
and define $\mathbb{I}_{\mathbf{a}}:X\rightarrow\{0,1\}$ by
\[
\mathbb{I}_{\mathbf{a}}(x)=\begin{cases}
1 & \mbox{, if }x\in I_{\mathbf{a}}\\
0 & \mbox{, if }x\notin I_{\mathbf{a}}
\end{cases}\mbox{ for }x\in X\:.
\]
Given $L>1$ denote by $\mathcal{Q}_{L}$ the set of maps $q:\mathbb{N}\rightarrow\mathbb{N}$
with
\[
q(n+1)>q(n)\mbox{ for each }n\in\mathbb{N}
\]
and
\[
\underset{n\rightarrow\infty}{\liminf}\:\frac{q(n)}{n}<L\:.
\]
For $q\in\mathcal{Q}_{L}$, $\mathbf{a}\in\cup_{k=1}^{\infty}\mathbb{N}^{k}$
and $\delta>0$ define
\[
\Gamma_{q,\mathbf{a}}^{\delta}=\{x\in X\::\:\underset{n\rightarrow\infty}{\liminf}\:\left|\frac{1}{n}\sum_{i=1}^{n}\mathbb{I}_{\mathbf{a}}(T^{q(i)}x)-\mu_{G}(I_{\mathbf{a}})\right|>\delta\}\:.
\]

\begin{thm}
\label{T6}For every $L>1$ and $\delta>0$ there exists $0<c_{L,\delta}<1$
with
\[
\sup\{\dim_{H}(\Gamma_{q,\mathbf{a}}^{\delta})\::\:q\in\mathcal{Q}_{L},\:\mathbf{a}\in\cup_{k=1}^{\infty}\mathbb{N}^{k}\}\le1-c_{L,\delta}\:.
\]

\end{thm}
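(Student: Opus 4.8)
The plan is to cover $\Gamma_{q,\mathbf{a}}^{\delta}$ efficiently by continued fraction cylinders and to bound the resulting $s$-dimensional sum for a single $s<1$ depending only on $L$ and $\delta$. First I would unwind the $\liminf$: if $x\in\Gamma_{q,\mathbf{a}}^{\delta}$ then $|\frac{1}{n}\sum_{i=1}^{n}\mathbb{I}_{\mathbf{a}}(T^{q(i)}x)-\mu_{G}(I_{\mathbf{a}})|>\delta$ for all $n\ge n_{x}$. Put $\mathcal{N}=\{n:q(n)<Ln\}$, which is infinite precisely because $\liminf_{n}q(n)/n<L$, and let $B_{n}=\{y\in X:|\frac{1}{n}\sum_{i=1}^{n}\mathbb{I}_{\mathbf{a}}(T^{q(i)}y)-\mu_{G}(I_{\mathbf{a}})|>\delta\}$. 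Then $\Gamma_{q,\mathbf{a}}^{\delta}\subseteq\bigcup_{n\in\mathcal{N},\,n\ge m}B_{n}$ for every $m$. Since the defining event depends only on the digits $\alpha_{1},\dots,\alpha_{q(n)+k}$, each $B_{n}$ is a union of cylinders of rank $M_{n}=q(n)+k$, and for $n\in\mathcal{N}$ we have $M_{n}<Ln+k$. This last inequality is exactly what membership in $\mathcal{Q}_{L}$ provides, and it is what keeps the number of samples $n$ comparable to the cylinder rank $M_{n}$.

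Next I would pass to the thermodynamic formalism of the Gauss map. Write $|J|$ for the diameter of a cylinder $J$, let $P(s)$ be the pressure of the potential $-s\log|T'|$, and let $\mu_{s}$ be its equilibrium state; recall that $P$ is continuous and strictly decreasing with $P(1)=0$ and $\mu_{1}=\mu_{G}$, and that bounded distortion yields the Gibbs relation $\mu_{s}(J)\asymp|J|^{s}e^{-MP(s)}$ for every rank-$M$ cylinder $J$, with constants uniform in $J$ and locally uniform in $s$. Summing over the cylinders making up $B_{n}$ gives $\sum_{J\subseteq B_{n}}|J|^{s}\asymp e^{M_{n}P(s)}\mu_{s}(B_{n})$. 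Everything then reduces to a large deviation estimate $\mu_{s}(B_{n})\le e^{-\kappa n}$ with rate $\kappa=\kappa_{L,\delta}>0$ independent of $\mathbf{a}$ and $q$: granting it, $\sum_{J\subseteq B_{n}}|J|^{s}\lesssim e^{(Ln+k)P(s)-\kappa n}$, and since $P(s)\to0$ as $s\to1^{-}$ one may fix $s<1$, depending only on $L,\delta$, with $LP(s)<\kappa$, making this sum decay geometrically in $n$. As the cylinder diameters in the cover also tend to $0$, summing over $n\in\mathcal{N}$, $n\ge m$, and letting $m\to\infty$ shows $\mathcal{H}^{s}(\Gamma_{q,\mathbf{a}}^{\delta})=0$, so $\dim_{H}(\Gamma_{q,\mathbf{a}}^{\delta})\le s$, and we set $c_{L,\delta}=1-s$.

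The crux, and the step I expect to be the main obstacle, is the uniform bound $\mu_{s}(B_{n})\le e^{-\kappa n}$. Under $\mu_{s}$ the digits are (exponentially) $\psi$-mixing and $\mu_{s}$ is $T$-invariant, so each $\mathbb{I}_{\mathbf{a}}(T^{q(i)}\cdot)$ has mean $\mu_{s}(I_{\mathbf{a}})$. I would first secure $\sup_{\mathbf{a}}|\mu_{s}(I_{\mathbf{a}})-\mu_{G}(I_{\mathbf{a}})|\le\delta/2$ for $s$ near $1$: there are only finitely many ``fat'' cylinders, namely those with $\mu_{G}(I_{\mathbf{a}})$ bounded below (a lower bound on $\mu_{G}(I_{\mathbf{a}})$ forces the length $k$ to be bounded), and for these $\mu_{s}\to\mu_{G}$ gives the estimate, while for the remaining cylinders both measures are automatically small. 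Hence $B_{n}$ lies in the event that the sampled frequency deviates from its $\mu_{s}$-mean by more than $\delta/2$.

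Two difficulties remain in estimating this event, and both must be resolved uniformly in the length $k$. The blocks $[q(i)+1,q(i)+k]$ carrying the events $\{T^{q(i)}x\in I_{\mathbf{a}}\}$ may overlap, so I would split the $n$ samples into the $k$ residue classes of $i$ modulo $k$; within a class the blocks are disjoint, and $\psi$-mixing lets the indicators be compared to independent Bernoulli$(\mu_{s}(I_{\mathbf{a}}))$ variables. For fat $\mathbf{a}$ the length $k$ is bounded and a Hoeffding-type rate of order $\delta^{2}$ suffices. For thin $\mathbf{a}$ the deviation is forced to be ``too many hits of a tiny set'', whose Chernoff/relative-entropy rate grows like $\delta\log(\delta/\mu_{s}(I_{\mathbf{a}}))$; since $\mu_{s}(I_{\mathbf{a}})$ decays exponentially in $k$, this rate grows like $\delta k$, exactly compensating the factor $k$ lost in passing to residue classes and producing a rate $\kappa_{\delta}>0$ independent of $k$. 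Establishing this compensation cleanly, together with the requisite $\psi$-mixing Bernstein/Chernoff inequality having the right dependence on the mean, is where the real work lies; the role of $L$ is only to guarantee, via $n\gtrsim M_{n}/L$, that this geometric decay in $n$ outpaces the pressure growth $e^{M_{n}P(s)}$.
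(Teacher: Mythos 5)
Your strategy is sound and genuinely different from the paper's, though two of its pillars need more care than your sketch admits. Both arguments share the same skeleton: unwind the $\liminf$, cover the set by the rank-$(q(n)+k)$ cylinders on which the deviation event occurs for a suitable $n$ with $q(n)<Ln$, and beat the size of that cover with a large deviations estimate. The divergence is at the two technical pressure points. (i) To control $\sum_{J}|J|^{s}$ you invoke thermodynamic formalism: Gibbs states $\mu_{s}$ for $-s\log|T'|$ with $\mu_{s}(J)\asymp|J|^{s}e^{-MP(s)}$, reducing everything to a large deviations bound \emph{under $\mu_{s}$}. The paper instead works only with $\mu_{G}$, whose $\psi$-mixing is classical, and handles the discrepancy between $|J|^{s}$ and $|J|$ by excising the set $\mathcal{E}_{\lambda}$ of points whose cylinders satisfy $|J_{n}(x)|\le e^{-\lambda n}$ infinitely often; by Theorem 4.1 of \cite{KPW} that set has dimension $<1$, and off it the factor $|J|^{s-1}\le e^{\lambda(q(n)+k)(1-s)}$ plays exactly the role of your $e^{M_{n}P(s)}$. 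Your route is more conceptual but obliges you to import, for an infinite-branch system, existence of $\mu_{s}$, the Gibbs property, and exponential $\psi$-mixing of the digits under $\mu_{s}$, all with constants uniform in $s$ near $1$. That uniformity is not optional: as written the argument is circular, since the rate $\kappa$ is a property of $\mu_{s}$ while $s$ is chosen using $\kappa$; you must first prove $\kappa(s)\ge\kappa_{0}>0$ for all $s\in[s_{0},1)$ and only then pick $s$ with $LP(s)<\kappa_{0}$. (ii) Uniformity in the word $\mathbf{a}$: you attack it frontally via a relative-entropy Chernoff bound whose rate $\sim\delta\log(\delta/\mu_{s}(I_{\mathbf{a}}))\sim\delta k$ compensates the splitting into residue classes (which, note, must be taken modulo $k+N$ with $N$ a mixing gap, not modulo $k$); this can be made to work, since the $k$-dependent prefactors are harmless once the exponent $s$ is uniform. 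But the paper sidesteps this entirely with the one-line monotonicity trick (\ref{E2}): once $|\mathbf{a}|\ge k_{\delta}$ we have $\mu_{G}(I_{\mathbf{a}})<\delta/2$, so a deviation of size $\delta$ forces the sampled frequency itself to exceed $\delta$, and replacing $\mathbf{a}$ by its length-$k_{\delta}$ prefix $\mathbf{a}_{\delta}$ only increases that frequency, giving $\Gamma_{q,\mathbf{a}}^{\delta}\subset\Gamma_{q,\mathbf{a}_{\delta}}^{\delta/2}$. This reduces all words to the finitely many lengths $k\le k_{\delta}$, after which a large deviations constant $M(\delta/2,k)$ that degrades with $k$ (the paper's Lemma \ref{L3.1}) is perfectly adequate. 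You could graft that trick onto your scheme and delete what you correctly identify as the hardest part of your plan.
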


\begin{rem}
The proof of theorem \ref{T6} resembles the proof of the main result
(Theorem 2.1) of \cite{KPW}. There an upper bound, which depends
only on $\delta$, is obtained for the dimension of sets of the form
\begin{equation}
\{x\in X\::\:\underset{n\rightarrow\infty}{\limsup}\:\left|\frac{1}{n}\sum_{i=1}^{n}\mathbb{I}_{\mathbf{a}}(T^{i}x)-\mu_{G}(I_{\mathbf{a}})\right|>\delta\}\}\:.\label{E101}
\end{equation}
Here we need to consider the families $\mathcal{Q}_{L}$, and the
more general averages
\[
\frac{1}{n}\sum_{i=1}^{n}\mathbb{I}_{\mathbf{a}}(T^{q(i)}x),
\]
due to the lack of stationarity. As a result we must define $\Gamma_{q,\mathbf{a}}^{\delta}$
with $\liminf$, as opposed to the sets (\ref{E101}) which are defined
with $\limsup$.
\end{rem}

\section{\label{S3}Proof of Theorem \ref{T6}}

The following large deviations estimate will be needed. Its proof
is almost identical to the proof of Lemma 3.1 from \cite{KPW}, but
we include it here for completeness.
\begin{lem}
\label{L3.1}Suppose $\mathbf{S}=\{\eta_{n}\}_{n=1}^{\infty}$ is
a stationary and $*$-mixing sequence of random variables. Let $k\ge1$
and $F:\mathbb{R}^{k}\rightarrow\{0,1\}$, set
\[
p=\mathbb{P}\{F(\eta_{1},...,\eta_{k})=1\},
\]
and let $q:\mathbb{N}\rightarrow\mathbb{N}$ be strictly increasing.
Then for every $\delta>0$ there exists a constant $M=M(\mathbf{S},\delta,k)>1$,
independent of $q$ and $F$, such that for every $n\ge1$,
\[
\mathbb{P}\left\{ \left|\frac{1}{n}\sum_{i=1}^{n}F(\eta_{q(i)},...,\eta_{q(i)+k-1})-p\right|>\delta\right\} \le M\cdot e^{-n/M}\:.
\]

\end{lem}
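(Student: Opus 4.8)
The plan is to prove a large deviations bound for the empirical average of a $*$-mixing stationary sequence sampled along a strictly increasing sequence $q(1)<q(2)<\cdots$. The standard route is an exponential Chebyshev (Chernoff-type) estimate, but the dependence in the sequence forces us to control mixed moments rather than simply multiply them as in the i.i.d.\ case. Writing $Y_i = F(\eta_{q(i)},\dots,\eta_{q(i)+k-1})$, each $Y_i\in\{0,1\}$ has mean $p$ by stationarity, and we must bound $\mathbb{P}\{|\frac{1}{n}\sum_{i=1}^n Y_i - p|>\delta\}$. I would split into the two one-sided tails $\mathbb{P}\{\frac1n\sum Y_i - p>\delta\}$ and $\mathbb{P}\{p-\frac1n\sum Y_i>\delta\}$ and treat each by exponentiating: for a parameter $t>0$, apply Markov's inequality to get $\mathbb{P}\{\sum_i(Y_i-p)>n\delta\}\le e^{-tn\delta}\,\mathbb{E}[e^{t\sum_i(Y_i-p)}]$, and symmetrically for the lower tail.

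The heart of the argument is estimating the moment generating function $\mathbb{E}[e^{t\sum_i(Y_i-p)}]$, and here the $*$-mixing hypothesis replaces independence. The key step is to show that the MGF nearly factorizes: one expands $e^{t\sum_i(Y_i-p)}=\prod_i e^{t(Y_i-p)}$ and successively peels off the last factor, using that $Y_i$ depends only on coordinates $\eta_{q(i)},\dots,\eta_{q(i)+k-1}$, while the remaining product $Y_1,\dots,Y_{i-1}$ depends on earlier coordinates. Because $q$ is strictly increasing, the index gaps $q(i)-q(i-1)-k+1$ need not exceed $N$, so the bare $*$-mixing inequality does not apply to every consecutive pair. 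The standard remedy, which I expect to follow \cite{KPW}, is to decompose the sum $\sum_{i=1}^n Y_i$ into a bounded number of subsequences along which consecutive sampled blocks are separated by a gap of at least $N$; then $*$-mixing with $n\ge N$ applies and the conditional expectation of each $e^{t(Y_i-p)}$ given the past is within a factor $(1+tf(N)\,\mathbb{E}[\cdots])$ of its unconditional value. Iterating yields a bound of the form $\mathbb{E}[e^{t\sum_i(Y_i-p)}]\le\prod_i\bigl(\mathbb{E}[e^{t(Y_i-p)}](1+Cf(\cdot))\bigr)$, and since $Y_i$ is a $0$--$1$ variable with mean $p$, a Taylor bound gives $\mathbb{E}[e^{t(Y_i-p)}]\le e^{Ct^2}$ for small $t$ and an absolute constant $C$ depending on $p$.

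I would then choose $t$ proportional to $\delta$ (as in any Chernoff bound) and split the sum into the promised subsequences, so that the final estimate becomes $\mathbb{P}\{\frac1n\sum_i Y_i - p>\delta\}\le e^{-cn\delta^2}$ for some constant depending on the mixing data $\mathbf{S}$, on $\delta$, and on $k$, but crucially not on $q$ or on the particular function $F$; the uniformity in $q$ and $F$ comes precisely because the mixing function $f$ and the separation parameter $N$ belong to the sequence $\mathbf{S}$ alone, and all per-term bounds depend only on $p\in[0,1]$. Absorbing the finitely many short initial terms and the subsequence-splitting losses into the constant $M$ produces the stated bound $M\,e^{-n/M}$.

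The main obstacle is the near-factorization of the moment generating function under merely $*$-mixing (rather than genuine independence) dependence, and in particular handling the case where the sampled blocks $[q(i),q(i)+k-1]$ and $[q(i+1),q(i+1)+k-1]$ are too close together for the mixing inequality to apply directly. Organizing the sum into a fixed number of widely separated subsequences and carefully tracking that the resulting multiplicative error terms $1+Cf(\cdot)$ stay uniformly bounded is the delicate bookkeeping; once that is in place, the Chernoff machinery is routine and the uniformity over $\mathcal{Q}_L$ follows because no estimate depends on $q$ beyond its being strictly increasing.
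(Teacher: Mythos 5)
Your proposal has the same skeleton as the paper's proof: split the sampled indices into finitely many subsequences so that consecutive sampled blocks along each subsequence are far enough apart for the $*$-mixing inequality to apply, compare the dependent sum along each subsequence to an i.i.d. Bernoulli$(p)$ sum, and finish with an exponential estimate plus a union bound. The one real difference in execution is that the paper never estimates a moment generating function of the dependent variables: it bounds the probability of each cylinder $\{\xi_{q(j+iM)}=\epsilon_{i},\ 0\le i<N\}$ by $(1+\delta^{2}/2)^{N}$ times the corresponding i.i.d. Bernoulli probability, and then simply quotes the known exponential tail estimate for the binomial distribution. That substitution makes all of your MGF-peeling bookkeeping unnecessary, but your route could in principle be carried out.

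There is, however, a genuine gap in your error control. You take the separation between consecutive sampled blocks to be the constant $N$ appearing in the definition of $*$-mixing, and you assert that it suffices for the per-step multiplicative errors $1+Cf(\cdot)$ to ``stay uniformly bounded,'' adding that uniformity holds because ``the mixing function $f$ and the separation parameter $N$ belong to the sequence $\mathbf{S}$ alone.'' This is not enough, and as written the argument fails for small $\delta$: the definition of $*$-mixing places no smallness requirement on $f(N)$ (it only demands $f(n)\downarrow0$), so along a subsequence of length roughly $n/m$ the peeling produces a cumulative factor $(1+Cf(N))^{n/m}=e^{c'n}$ with $c'>0$ independent of $\delta$, while the Chernoff gain with $t\propto\delta$ is only $e^{-c\delta^{2}n}$; when $f(N)$ is not small compared with $\delta^{2}$ the bound is vacuous. (Even under your refined per-step error $1+Ctf(N)$, the same computation forces $f(N)\lesssim\delta$, which again is not guaranteed.) The missing idea is that the separation must itself depend on $\delta$ and $k$: using $f(n)\rightarrow0$, choose $M=M(\mathbf{S},\delta,k)$ so large that the mixing error across a gap of $M-k+1$ coordinates is at most $\delta^{2}/2$, and only then split into the $M$ arithmetic subsequences $q(j),q(j+M),q(j+2M),\dots$ for $1\le j\le M$. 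This is precisely the first step of the paper's proof, and it is what makes the accumulated error $e^{\delta^{2}N/2}$ beatable by the binomial tail bound $2e^{-N\delta^{2}}$. Once your separation parameter (and hence the number of subsequences and the final constant $M$) is allowed to depend on $\delta$ and $k$ in this way, your argument goes through.
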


\begin{proof}
Fix $\delta>0$, then since $\mathbf{S}$ is $*$-mixing there exists
$M\in\mathbb{N}$ with 
\begin{equation}
\left|\mathbb{P}(E_{1}\cap E_{2})-\mathbb{P}(E_{1})\mathbb{P}(E_{2})\right|\le\frac{\delta^{2}}{2}\mathbb{P}(E_{1})\mathbb{P}(E_{2})\label{E96}
\end{equation}
for each $m\ge1$, $E_{1}\in\sigma\{\eta_{1},...,\eta_{m+k-1}\}$
and $E_{2}\in\sigma\{\eta_{m+M},\eta_{m+M+1},...\}$. For $i\ge1$
set $\xi_{i}=F(\eta_{i},...,\eta_{i+k-1})$, fix $n\ge M$, and write
\[
A_{n}=\left\{ \left|\frac{1}{n}\sum_{i=1}^{n}\xi_{q(i)}-p\right|>\delta\right\} \:.
\]
Let $N$ be the integral part of $n/M$, and for $1\le j\le M$ set
\[
B_{n,j}=\left\{ \left|\frac{1}{N}\sum_{i=0}^{N-1}\xi_{q(j+iM)}-p\right|>\delta-\frac{1}{N}\right\} \:.
\]
Clearly $A_{n}\subset\cup_{j=1}^{M}B_{n,j}$, hence
\begin{equation}
\mathbb{P}(A_{n})\le\sum_{j=1}^{M}\mathbb{P}(B_{n,j})\:.\label{E94}
\end{equation}

Fix $1\le j\le M$, and for $\epsilon_{0},...,\epsilon_{N-1}\in\{0,1\}$
write
\[
\mathcal{C}_{\epsilon_{0},...,\epsilon_{N-1}}=\{\xi_{q(j+iM)}=\epsilon_{i}\mbox{ for each }0\le i<N\}\:.
\]
Let $\zeta_{0},\zeta_{1},...$ be independent $\{0,1\}$-valued random
variables with mean $p$. Since $q$ is strictly increasing it follows
easily from (\ref{E96}) that,
\begin{align*}
\mathbb{P}(\mathcal{C}_{\epsilon_{0},...,\epsilon_{N-1}}) & \le(1+\frac{\delta^{2}}{2})^{N}\cdot\prod_{i=0}^{N-1}\mathbb{P}\{\xi_{q(j+iM)}=\epsilon_{i}\}\\
 & \le e^{\delta^{2}N/2}\cdot\mathbb{P}\{\zeta_{i}=\epsilon_{i}\mbox{ for each }0\le i<N\}\:.
\end{align*}
Set $Z=\sum_{i=0}^{N-1}\zeta_{i}$, then $Z$ is a binomial random
variable with parameters $N$ and $p$, and
\begin{align}
\mathbb{P}(B_{n,j}) & =\sum_{\left|\sum_{i=0}^{N-1}\epsilon_{i}-Np\right|>N\delta-1}\mathbb{P}(\mathcal{C}_{\epsilon_{0},...,\epsilon_{N-1}})\nonumber \\
 & \le e^{\delta^{2}N/2}\cdot\mathbb{P}\{\left|Z-Np\right|>N\delta-1\}\:.\label{E95}
\end{align}
By the exponential estimate for the binomial distribution (see e.g.
Cor. A.1.7 in \cite{AS}) we have for $N\ge4/\delta$,
\[
\mathbb{P}\{\left|Z-Np\right|>N\delta-1\}\le2e^{-N\delta^{2}}\:.
\]
This together with (\ref{E95}) gives,
\[
\mathbb{P}(B_{n,j})\le2e^{-\delta^{2}N/2}\mbox{ for each }1\le j\le M\:.
\]
The lemma now follows from (\ref{E94}).
\end{proof}

As mentioned in Remark \ref{R2.2}, the sequence $\{\alpha_{i}\}_{i=1}^{\infty}$
is $\psi$-mixing with respect to $\mu_{G}$. From this and Lemma
\ref{L3.1} we get the following corollary.
\begin{cor}
\label{P8}Given $k\ge1$ and $\delta>0$ there exists a constant
$M=M(\delta,k)>1$, such that for every strictly increasing $q:\mathbb{N}\rightarrow\mathbb{N}$,
$\mathbf{a}\in\mathbb{N}^{k}$ and $n\ge1$,
\[
\mu_{G}\left\{ x\in X\::\:\left|\frac{1}{n}\sum_{i=1}^{n}\mathbb{I}_{\mathbf{a}}(T^{q(i)}x)-\mu_{G}(I_{\mathbf{a}})\right|>\delta\right\} \le M\cdot e^{-n/M}\:.
\]

\end{cor}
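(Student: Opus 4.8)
The plan is to derive the corollary as a direct application of Lemma \ref{L3.1} to the sequence of continued fraction digits under the Gauss measure. First I would take the probability space to be $(X,\mu_{G})$ and set $\mathbf{S}=\{\alpha_{i}\}_{i=1}^{\infty}$. Since $\mu_{G}$ is $T$-invariant and $\alpha_{i}(x)=\alpha_{1}(T^{i-1}x)$, the sequence $\mathbf{S}$ is stationary; and as recorded in Remark \ref{R2.2} it is $\psi$-mixing, hence $*$-mixing (take $N=1$). Thus $\mathbf{S}$ satisfies the hypotheses of Lemma \ref{L3.1}.

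Next I would choose the function $F$. Given $\mathbf{a}=(a_{1},\ldots,a_{k})\in\mathbb{N}^{k}$, define $F:\mathbb{R}^{k}\rightarrow\{0,1\}$ by $F(x_{1},\ldots,x_{k})=1$ if $(x_{1},\ldots,x_{k})=\mathbf{a}$ and $F=0$ otherwise. Then
\[
p=\mathbb{P}\{F(\eta_{1},\ldots,\eta_{k})=1\}=\mu_{G}\{\alpha_{i}=a_{i}\mbox{ for }1\le i\le k\}=\mu_{G}(I_{\mathbf{a}}).
\]

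The one point requiring care is an index shift. Since $\alpha_{j}(T^{m}x)=\alpha_{m+j}(x)$, for any $x\in X$ and $m\ge0$ we have $\mathbb{I}_{\mathbf{a}}(T^{m}x)=1$ exactly when $\alpha_{m+j}(x)=a_{j}$ for $1\le j\le k$, i.e. $\mathbb{I}_{\mathbf{a}}(T^{m}x)=F(\alpha_{m+1}(x),\ldots,\alpha_{m+k}(x))$. To match the form $F(\eta_{q(i)},\ldots,\eta_{q(i)+k-1})$ appearing in Lemma \ref{L3.1}, I would replace $q$ by $\tilde{q}(i):=q(i)+1$, which is again strictly increasing; then $F(\eta_{\tilde{q}(i)},\ldots,\eta_{\tilde{q}(i)+k-1})=\mathbb{I}_{\mathbf{a}}(T^{q(i)}x)$, so that
\[
\frac{1}{n}\sum_{i=1}^{n}\mathbb{I}_{\mathbf{a}}(T^{q(i)}x)=\frac{1}{n}\sum_{i=1}^{n}F(\eta_{\tilde{q}(i)},\ldots,\eta_{\tilde{q}(i)+k-1}).
\]

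Applying Lemma \ref{L3.1} with this $F$ and $\tilde{q}$ yields a constant $M=M(\mathbf{S},\delta,k)>1$, independent of $\tilde{q}$ (hence of $q$) and of $F$ (hence of $\mathbf{a}$), for which the stated bound $M\cdot e^{-n/M}$ holds for every $n\ge1$. Because $\mathbf{S}$ is the fixed sequence of Gauss digits, $M$ depends only on $\delta$ and $k$, giving exactly the uniformity over $q$ and $\mathbf{a}$ claimed. There is essentially no analytic obstacle here: the entire deviation estimate is contained in Lemma \ref{L3.1}, and the only things to verify are the stationarity and mixing of the digit sequence under $\mu_{G}$ and the bookkeeping of the index shift, both of which are routine.
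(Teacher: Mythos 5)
Your proposal is correct and follows exactly the paper's route: the paper derives Corollary \ref{P8} precisely by applying Lemma \ref{L3.1} to the digit sequence $\{\alpha_{i}\}_{i=1}^{\infty}$, which is stationary and $\psi$-mixing (hence $*$-mixing) under $\mu_{G}$ as noted in Remark \ref{R2.2}. The only difference is that you spell out the bookkeeping (the choice of $F$ as the indicator of the word $\mathbf{a}$ and the index shift $\tilde{q}(i)=q(i)+1$) which the paper leaves implicit, and this bookkeeping is done correctly.
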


Given $x\in X$ and $n\ge1$ write $J_{n}(x)=I_{(\alpha_{1}(x),...,\alpha_{n}(x))}$.
Let $\mathcal{L}$ be the Lebesgue measure, and write $|I|=\mathcal{L}(I)$
for $I\subset X$ . For $s\ge0$ let $\mathcal{H}^{s}$ be the $s$-dimensional
Hausdorff measure on $X$. For $\eta>0$ and $E\subset X$ write
\[
\mathcal{H}_{\eta}^{s}(E)=\inf\{\sum_{i=1}^{\infty}|I_{i}|^{s}\::\:E\subset\cup_{i=1}^{\infty}I_{i}\mbox{ and }|I_{i}|\le\eta\},
\]
then
\[
\underset{\eta\downarrow0}{\lim}\:\mathcal{H}_{\eta}^{s}(E)=\mathcal{H}^{s}(E)\:.
\]
Given $n\ge1$ write
\[
\beta_{n}=\sup\{|I_{\mathbf{a}}|\::\:\mathbf{a}\in\mathbb{N}^{n}\},
\]
then $\beta_{n}\overset{n}{\rightarrow}0$.
\begin{proof}[Proof of Theorem \ref{T6}]
Let $\delta>0$, $L>1$, $q\in\mathcal{Q}_{L}$, $k\ge1$ and $\mathbf{a}\in\mathbb{N}^{k}$.
Given $\lambda>0$ set,
\[
\mathcal{E}_{\lambda}:=\cap_{j=1}^{\infty}\cup_{n=j}^{\infty}\{x\in X\::\:|J_{n}(x)|\le e^{-\lambda n}\}\:.
\]
By Theorem 4.1 in \cite{KPW} there exists $\lambda>0$ with $\dim_{H}\mathcal{E}_{\lambda}<1$.
For $N\ge1$ set
\[
\Gamma_{q,\mathbf{a}}^{\delta,N}:=\left\{ x\in X\::\:\begin{array}{c}
\left|\frac{1}{n}\sum_{i=1}^{n}\mathbb{I}_{\mathbf{a}}(T^{q(i)}x)-\mu_{G}(I_{\mathbf{a}})\right|>\delta,\\
|J_{q(n)+k}(x)|\ge e^{-\lambda(q(n)+k)}
\end{array}\mbox{ for all }n\ge N\right\} ,
\]
then
\begin{equation}
\Gamma_{q,\mathbf{a}}^{\delta}\setminus\mathcal{E}_{\lambda}\subset\cup_{N=1}^{\infty}\Gamma_{q,\mathbf{a}}^{\delta,N}\:.\label{E1}
\end{equation}
Fix $N\ge1$ and for $n\ge1$ set
\[
\Upsilon_{q,\mathbf{a}}^{\delta,n}:=\left\{ x\in X\::\:\begin{array}{c}
\left|\frac{1}{n}\sum_{i=1}^{n}\mathbb{I}_{\mathbf{a}}(T^{q(i)}x)-\mu_{G}(I_{\mathbf{a}})\right|>\delta,\\
|J_{q(n)+k}(x)|\ge e^{-\lambda(q(n)+k)}
\end{array}\right\} ,
\]
then $\Gamma_{q,\mathbf{a}}^{\delta,N}\subset\Upsilon_{q,\mathbf{a}}^{\delta,n}$
for all $n\ge N$.

Let $M=M(\delta,k)>1$ be as in Corollary \ref{P8}, set $s=1-\frac{1}{\lambda LM}$
and let $\eta>0$. From $q\in\mathcal{Q}_{L}$ we get $\underset{n\rightarrow\infty}{\liminf}\:\frac{q(n)}{n}<L$.
From this and $\beta_{n}\overset{n}{\rightarrow}0$ it follows that
there exists $n\ge N$ such that $\beta_{n}<\eta$ and $q(n)<nL$.
By the definition of $\Upsilon_{q,\mathbf{a}}^{\delta,n}$ there exists
$B_{n}\subset\mathbb{N}^{q(n)+k}$ with $\Upsilon_{q,\mathbf{a}}^{\delta,n}=\cup_{\mathbf{b}\in B_{n}}I_{\mathbf{b}}$.
From Corollary \ref{P8} we get
\[
\mu_{G}(\Upsilon_{q,\mathbf{a}}^{\delta,n})\le M\cdot e^{-n/M}\:.
\]
Since
\[
r:=\underset{x\in[0,1]}{\min}\:\frac{d\mu_{G}}{d\mathcal{L}}(x)>0,
\]
it follows
\begin{equation}
\sum_{\mathbf{b}\in B_{n}}|I_{\mathbf{b}}|=\mathcal{L}(\Upsilon_{q,\mathbf{a}}^{\delta,n})\le r^{-1}\cdot\mu_{G}(\Upsilon_{q,\mathbf{a}}^{\delta,n})\le r^{-1}M\cdot e^{-n/M}\:.\label{E4}
\end{equation}
From
\[
\Gamma_{q,\mathbf{a}}^{\delta,N}\subset\Upsilon_{q,\mathbf{a}}^{\delta,n}=\cup_{\mathbf{b}\in B_{n}}I_{\mathbf{b}}
\]
and since $|I_{\mathbf{b}}|\le\beta_{n}<\eta$ for every $\mathbf{b}\in B_{n}$,
\begin{equation}
\mathcal{H}_{\eta}^{s}(\Gamma_{q,\mathbf{a}}^{\delta,N})\le\sum_{\mathbf{b}\in B_{n}}|I_{\mathbf{b}}|^{s}\le(\underset{\mathbf{b}\in B_{n}}{\inf}\:|I_{\mathbf{b}}|)^{s-1}\cdot\sum_{\mathbf{b}\in B_{n}}|I_{\mathbf{b}}|\:.\label{E9}
\end{equation}
By the definition of $\Upsilon_{q,\mathbf{a}}^{\delta,n}$,
\[
|I_{\mathbf{b}}|\ge e^{-\lambda(q(n)+k)}\mbox{ for every }\mathbf{b}\in B_{n}\:.
\]
Hence from (\ref{E9}), (\ref{E4}), $q(n)<nL$ and $s=1-\frac{1}{\lambda LM}$,
\begin{multline*}
\mathcal{H}_{\eta}^{s}(\Gamma_{q,\mathbf{a}}^{\delta,N})\le e^{\lambda(q(n)+k)(1-s)}\cdot r^{-1}M\cdot e^{-n/M}\\
\le r^{-1}Me^{\lambda k}\cdot\exp(n(\lambda L(1-s)-M^{-1}))=r^{-1}Me^{\lambda k}\:.
\end{multline*}
As this holds for every $\eta>0$
\[
\mathcal{H}^{s}(\Gamma_{q,\mathbf{a}}^{\delta,N})=\underset{\eta\downarrow0}{\lim}\:\mathcal{H}_{\eta}^{s}(\Gamma_{q,\mathbf{a}}^{\delta,N})\le r^{-1}Me^{\lambda k}<\infty,
\]
and so
\[
\dim_{H}(\Gamma_{q,\mathbf{a}}^{\delta,N})\le s=1-\frac{1}{\lambda LM}\:.
\]
As this holds for every $N\ge1$ it follows from (\ref{E1}) that,
\begin{equation}
\dim_{H}(\Gamma_{q,\mathbf{a}}^{\delta}\setminus\mathcal{E}_{\lambda})\le\underset{N\ge1}{\sup}\:\dim_{H}(\Gamma_{q,\mathbf{a}}^{\delta,N})\le1-\frac{1}{\lambda L\cdot M(\delta,k)}\:.\label{E3}
\end{equation}

We shall now complete the proof of the theorem. We continue to fix
$\delta>0$ and $L>1$. Let
\[
k_{\delta}=\inf\{k\ge1\::\:\underset{\mathbf{a}\in\mathbb{N}^{k}}{\sup}\:\mu_{G}(I_{\mathbf{a}})<\frac{\delta}{2}\},
\]
then clearly $k_{\delta}<\infty$. For $q\in\mathcal{Q}_{L}$, $k\ge k_{\delta}$
and $(a_{1},...,a_{k})=\mathbf{a}\in\mathbb{N}^{k}$,
\begin{equation}
\Gamma_{q,\mathbf{a}}^{\delta/2}\supset\left\{ x\in X\::\:\underset{n\rightarrow\infty}{\liminf}\:\frac{1}{n}\sum_{i=1}^{n}\mathbb{I}_{\mathbf{a}}(T^{q(i)}x)>\delta\right\} \supset\Gamma_{q,\mathbf{a}}^{\delta}\:.\label{E2}
\end{equation}
Set $\mathbf{a}_{\delta}=(a_{1},...,a_{k_{\delta}})$, then since
$\mathbb{I}_{\mathbf{a}_{\delta}}\ge\mathbb{I}_{\mathbf{a}}$ it follows
from (\ref{E2}) that $\Gamma_{q,\mathbf{a}_{\delta}}^{\delta/2}\supset\Gamma_{q,\mathbf{a}}^{\delta}$,
and so
\[
\dim_{H}(\Gamma_{q,\mathbf{a}_{\delta}}^{\delta/2})\ge\dim_{H}(\Gamma_{q,\mathbf{a}}^{\delta})\:.
\]
This together with (\ref{E3}) gives
\begin{multline*}
\sup\{\dim_{H}(\Gamma_{q,\mathbf{a}}^{\delta})\::\:q\in\mathcal{Q}_{L},\:\mathbf{a}\in\cup_{k=1}^{\infty}\mathbb{N}^{k}\}\\
\le\max\{\dim_{H}(\mathcal{E}_{\lambda}),\underset{1\le k\le k_{\delta}}{\max}(1-\frac{1}{\lambda L\cdot M(\delta/2,k)})\}<1,
\end{multline*}
which completes the proof of the theorem.
\end{proof}

\section{\label{S4}Proof of the main result}
\begin{proof}[Proof of Theorem \ref{T3}]
Fix $k\ge0$, let $\{A_{n}\}_{n=1}^{\infty}$ an $\mathbb{N}$-valued
$k$-step Markov chain which is $*$-mixing, and let $\nu$ be the
distribution of $[A_{1},A_{2},...]$. Given words $\mathbf{a}\in\mathbb{N}^{m}$
and $\mathbf{b}\in\mathbb{N}^{l}$ we denote by $\mathbf{ab}\in\mathbb{N}^{m+l}$
their concatenation. As noted in observation 2.2 in \cite{KPW}, the
continued fraction digits under $\mu_{G}$ do not form a $k$-step
Markov chain. It follows that there exist $m\in\mathbb{N}$, $\mathbf{a}\in\mathbb{N}^{k}$,
$\mathbf{b}\in\mathbb{N}^{m}$ and $c\in\mathbb{N}$ with
\[
\frac{\mu_{G}(I_{\mathbf{ba}c})}{\mu_{G}(I_{\mathbf{ba}})}\ne\frac{\mu_{G}(I_{\mathbf{a}c})}{\mu_{G}(I_{\mathbf{a}})},
\]
and so
\begin{equation}
\delta:=\left|\mu_{G}(I_{\mathbf{ba}c})-\frac{\mu_{G}(I_{\mathbf{a}c})\cdot\mu_{G}(I_{\mathbf{ba}})}{\mu_{G}(I_{\mathbf{a}})}\right|>0\:.\label{E8}
\end{equation}
If $k=0$, i.e. when $A_{1},A_{2},...$ are independent, $\mathbf{a}$
is the empty word and $I_{\mathbf{a}}=X$. Let $\mu_{G}(I_{\mathbf{a}})>\epsilon>0$
be such that if $p_{1},p_{2},p_{3}\in[0,1]$ satisfy
\[
|p_{1}-\mu_{G}(I_{\mathbf{a}c})|,\:|p_{2}-\mu_{G}(I_{\mathbf{ba}})|,\:|p_{3}-\mu_{G}(I_{\mathbf{a}})|\le\epsilon,
\]
then
\begin{equation}
|\frac{p_{1}\cdot p_{2}}{p_{3}}-\frac{\mu_{G}(I_{\mathbf{a}c})\cdot\mu_{G}(I_{\mathbf{ba}})}{\mu_{G}(I_{\mathbf{a}})}|<\frac{\delta}{2}\:.\label{E5}
\end{equation}

For each $i\ge1$ and $\mathbf{d}\in\cup_{k=1}^{\infty}\mathbb{N}^{k}$
denote by $E_{\mathbf{d},i}$ the event
\[
\left\{ A_{i}...A_{i+|\mathbf{d}|-1}=\mathbf{d}\right\} ,
\]
where $|\mathbf{d}|$ stands for the length of $\mathbf{d}$, and
set $p_{\mathbf{d},i}:=\mathbb{P}(E_{\mathbf{d},i})$. Let $\mathbf{d}\in\cup_{k=1}^{\infty}\mathbb{N}^{k}$
and assume
\[
\underset{n}{\limsup}\:\frac{1}{n}\#\left\{ 1\le i\le n\::\:p_{\mathbf{d},i}<\mu_{G}(I_{\mathbf{d}})-\epsilon\right\} >\frac{1}{10},
\]
then there exists $q\in\mathcal{Q}_{10}$ with
\begin{equation}
p_{\mathbf{d},q(i)}<\mu_{G}(I_{\mathbf{d}})-\epsilon\mbox{ for all }i\ge1\:.\label{E10}
\end{equation}
Since $\{A_{n}\}_{n=1}^{\infty}$ is $*$-mixing it is evident from
the definition that $\{1_{E_{\mathbf{d},q(i)}}\}_{i=1}^{\infty}$
is also $*$-mixing, where $1_{E}$ denotes the indicator of the event
$E$. By the law of large numbers for sums of $*$-mixing bounded
random variables (see Theorem 2 in \cite{BHK}),
\[
\underset{n}{\lim}\:\frac{1}{n}\sum_{i=1}^{n}\left(1_{E_{\mathbf{d},q(i)}}-p_{\mathbf{d},q(i)}\right)=0\:\mbox{ almost surely}\:.
\]
Hence for $\nu$-a.e. $x\in X$,
\[
\underset{n}{\lim}\:\left|\frac{1}{n}\sum_{i=1}^{n}\mathbb{I}_{\mathbf{d}}(T^{q(i)}x)-\frac{1}{n}\sum_{i=1}^{n}p_{\mathbf{d},q(i)}\right|=0\:.
\]
From this and (\ref{E10}) we get that for $\nu$-a.e. $x\in X$,
\[
\underset{n\rightarrow\infty}{\liminf}\:\left|\frac{1}{n}\sum_{i=1}^{n}\mathbb{I}_{\mathbf{d}}(T^{q(i)}x)-\mu_{G}(I_{\mathbf{d}})\right|=\underset{n\rightarrow\infty}{\liminf}\:\left|\frac{1}{n}\sum_{i=1}^{n}p_{\mathbf{d},q(i)}-\mu_{G}(I_{\mathbf{d}})\right|\ge\epsilon,
\]
which implies $\nu(\Gamma_{q,\mathbf{d}}^{\epsilon/2})=1$. Now by
Theorem \ref{T6}
\[
\dim_{H}(\nu)\le\dim_{H}(\Gamma_{q,\mathbf{d}}^{\epsilon/2})\le1-c_{10,\epsilon/2}\:.
\]
In a similar manner it can be shown that $\dim_{H}(\nu)\le1-c_{10,\epsilon/2}$
if
\[
\underset{n}{\limsup}\:\frac{1}{n}\#\left\{ 1\le i\le n\::\:p_{\mathbf{d},i}>\mu(I_{\mathbf{d}})+\epsilon\right\} >\frac{1}{10}\:.
\]

It follows that we can assume
\[
\underset{n}{\liminf}\:\frac{1}{n}\#\left\{ 1\le i\le n\::\:\begin{array}{c}
|p_{\mathbf{ba},i}-\mu(I_{\mathbf{ba}})|\le\epsilon,\\
|p_{\mathbf{a}c,i+m}-\mu(I_{\mathbf{a}c})|\le\epsilon,\\
|p_{\mathbf{a},i+m}-\mu(I_{\mathbf{a}})|\le\epsilon
\end{array}\right\} >\frac{1}{10},
\]
and so there exists $q\in\mathcal{Q}_{10}$ with
\begin{equation}
\left|p_{\mathbf{ba},q(i)}-\mu_{G}(I_{\mathbf{ba}})\right|,\:\left|p_{\mathbf{a}c,q(i)+m}-\mu_{G}(I_{\mathbf{a}c})\right|,\:\left|p_{\mathbf{a},q(i)+m}-\mu_{G}(I_{\mathbf{a}})\right|\le\epsilon\label{E6}
\end{equation}
for all $i\ge1$. Since $\{A_{n}\}_{n=1}^{\infty}$ is a Markov chain
of order $k$
\begin{equation}
p_{\mathbf{ba}c,q(i)}=\frac{p_{\mathbf{ba},q(i)}\cdot p_{\mathbf{a}c,q(i)+m}}{p_{\mathbf{a},q(i)+m}}\mbox{ for }i\ge1,\label{E7}
\end{equation}
where $p_{\mathbf{a},q(i)+m}>0$ by (\ref{E6}) and $\mu_{G}(I_{\mathbf{a}})>\epsilon$.
The sequence $\{1_{E_{\mathbf{ba}c,q(i)}}\}_{i=1}^{\infty}$ is $*$-mixing,
so by the law of large numbers for sums of $*$-mixing random variables,
\[
\underset{n}{\lim}\:\frac{1}{n}\sum_{i=1}^{n}\left(1_{E_{\mathbf{ba}c,q(i)}}-p_{\mathbf{ba}c,q(i)}\right)=0\:\mbox{ almost surely}\:.
\]
It follows that for $\nu$-a.e. $x\in X$,
\[
\underset{n}{\lim}\:\left|\frac{1}{n}\sum_{i=1}^{n}\mathbb{I}_{\mathbf{ba}c}(T^{q(i)}x)-\frac{1}{n}\sum_{i=1}^{n}p_{\mathbf{ba}c,q(i)}\right|=0\:.
\]
From this, (\ref{E7}), (\ref{E6}), (\ref{E5}) and (\ref{E8}) we
get that for $\nu$-a.e. $x\in X$,
\begin{align*}
\underset{n}{\liminf}\:\left|\frac{1}{n}\sum_{i=1}^{n}\mathbb{I}_{\mathbf{ba}c}(T^{q(i)}x)-\mu_{G}(I_{\mathbf{ba}c})\right|\\
=\underset{n}{\liminf}\:\left|\frac{1}{n}\sum_{i=1}^{n}p_{\mathbf{ba}c,q(i)}-\mu_{G}(I_{\mathbf{ba}c})\right|\\
\ge\left|\mu_{G}(I_{\mathbf{ba}c})-\frac{\mu_{G}(I_{\mathbf{a}c})\cdot\mu_{G}(I_{\mathbf{ba}})}{\mu_{G}(I_{\mathbf{a}})}\right|\\
-\underset{n}{\limsup}\:\frac{1}{n}\sum_{i=1}^{n}\left|\frac{p_{\mathbf{ba},q(i)}\cdot p_{\mathbf{a}c,q(i)+m}}{p_{\mathbf{a},q(i)+m}}-\frac{\mu_{G}(I_{\mathbf{a}c})\cdot\mu_{G}(I_{\mathbf{ba}})}{\mu_{G}(I_{\mathbf{a}})}\right| & \ge\delta/2\:.
\end{align*}
Hence $\nu(\Gamma_{q,\mathbf{ba}c}^{\delta/4})=1$, and so by Theorem
\ref{T6}
\[
\dim_{H}(\nu)\le\dim_{H}(\Gamma_{q,\mathbf{ba}c}^{\delta/4})\le1-c_{10,\delta/4}\:.
\]
This completes the proof of the theorem.
\end{proof}

\section{\label{S5}Construction of the measures $\nu_{K}$}

In the proof below we use the notation for the Kolmogorov-Sinai entropy
from Chapter 4 of \cite{W2}. In particular the entropy of a Borel
probability measure $\theta$ on $X$, with respect to a countable
Borel partition $\xi$ of $X$, is denoted by $H_{\theta}(\xi)$.
If $\mathcal{F}$ is a sub-$\sigma$-algebra of the Borel $\sigma$-algebra
of $X$, then $H_{\theta}(\xi\mid\mathcal{F})$ is the entropy of
$\theta$ with respect to $\xi$ conditioned on $\mathcal{F}$. If
$\theta$ is $T$-invariant the entropy of $T$ with respect to $\theta$
is denoted by $h_{\theta}$. If $\theta$ is also ergodic we write
$\gamma_{\theta}$ for the Lyapunov exponent of the system $(X,T,\theta)$,
i.e.
\[
\gamma_{\theta}=\int_{X}\log\left|T'\left(x\right)\right|\:d\theta(x)=-2\int_{X}\log x\:d\theta(x)\:.
\]
Given $a_{1},...,a_{m}\in\mathbb{N}$ we denote by $[a_{1},...,a_{m}]$
the finite continued fraction which lies in $(0,1)$ and has coefficients
$a_{1},...,a_{m}$, i.e.
\[
[a_{1},...,a_{m}]=\frac{1}{a_{1}+\frac{1}{a_{2}+\cdots\frac{1}{a_{m-1}+\frac{1}{a_{m}}}}}\:.
\]

In order to establish the $\psi$-mixing property in the proof of
Claim \ref{C5} we shall need the following proposition. It follows
directly from Theorem 1 in \cite{Br2}.
\begin{prop}
\label{P5.1}Let $\{A_{n}\}_{n=1}^{\infty}$ be a stationary and mixing
sequence of random variables. Assume there exists a constant $0<C<\infty$
with
\[
C^{-1}\le\frac{\mathbb{P}(E\cap F)}{\mathbb{P}(E)\mathbb{P}(F)}\le C
\]
for all $l\ge1$, $E\in\sigma\{A_{1},...,A_{l}\}$ and $F\in\sigma\{A_{l+1},A_{l+2},...\}$.
Then $\{A_{n}\}_{n=1}^{\infty}$ is $\psi$-mixing.
\end{prop}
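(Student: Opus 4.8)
The plan is to reduce the statement to Bradley's Theorem~1 in \cite{Br2} by recasting both the hypothesis and the conclusion in terms of the classical $\psi$-mixing coefficients. For a strictly stationary sequence $\{A_{n}\}_{n=1}^{\infty}$ and an integer $n\ge1$ I would set
\[
\psi^{*}(n)=\sup\frac{\mathbb{P}(E\cap F)}{\mathbb{P}(E)\mathbb{P}(F)},\qquad\psi_{*}(n)=\inf\frac{\mathbb{P}(E\cap F)}{\mathbb{P}(E)\mathbb{P}(F)},
\]
where in both cases the supremum/infimum runs over all $l\ge1$, all $E\in\sigma\{A_{1},\dots,A_{l}\}$ with $\mathbb{P}(E)>0$, and all $F\in\sigma\{A_{l+n},A_{l+n+1},\dots\}$ with $\mathbb{P}(F)>0$, and then $\psi(n)=\max\{\psi^{*}(n)-1,\,1-\psi_{*}(n)\}$. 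The first, purely formal, step is to observe that the $\psi$-mixing condition of the present paper (the case $N=1$ of the definition in Section~\ref{S2}) is exactly the assertion $\psi(n)\to0$: by construction $\psi(n)$ is the smallest constant for which $|\mathbb{P}(E\cap F)-\mathbb{P}(E)\mathbb{P}(F)|\le\psi(n)\mathbb{P}(E)\mathbb{P}(F)$ holds for all admissible $E,F$, and $\psi(n)$ is readily checked to be non-increasing in $n$ (every pair admissible for gap $n+1$ is admissible for gap $n$ after shifting $l$ by one), so one may take $f=\psi$ in that definition.

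The second step is to read off the quantitative hypothesis. Specializing the assumed two-sided bound to $n=1$, that is to $F\in\sigma\{A_{l+1},A_{l+2},\dots\}$, and using that it holds for every $l\ge1$, gives precisely
\[
\psi^{*}(1)\le C<\infty\qquad\text{and}\qquad\psi_{*}(1)\ge C^{-1}>0.
\]
Thus the sequence has a finite upper one-gap coefficient and a strictly positive lower one-gap coefficient, which is the regularity at unit gap that such theorems require.

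With these reformulations in hand, the proposition follows by invoking Bradley's Theorem~1 in \cite{Br2}, whose hypotheses amount to strict stationarity, mixing, and exactly the finiteness of $\psi^{*}(1)$ and positivity of $\psi_{*}(1)$ established above, and whose conclusion is $\psi(n)\to0$; by the first step this is the desired $\psi$-mixing property. The one point I expect to require care — and which I regard as the main obstacle in the write-up — is confirming that the notion of ``mixing'' appearing in the statement (mixing of the associated shift in the ergodic-theoretic sense) is precisely the hypothesis consumed by that theorem, rather than a formally stronger notion such as $\alpha$-mixing. This is what permits the cylinder-set convergence supplied by mixing to be promoted, against the uniform one-step bound, to the uniform convergence over all events encoded in $\psi(n)\to0$. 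Since that promotion is the substance of \cite{Br2}, nothing beyond the bookkeeping of the two preceding steps is needed here.
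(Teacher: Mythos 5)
Your proposal is correct and takes essentially the same route as the paper: the paper's entire proof of Proposition \ref{P5.1} is the single remark that it ``follows directly from Theorem 1 in \cite{Br2}'', and your bookkeeping --- recasting the gap-one two-sided bound as $\psi^{*}(1)\le C$ and $\psi_{*}(1)\ge C^{-1}$, and identifying $\psi(n)\to0$ with the paper's $N=1$ mixing definition --- is exactly what that citation leaves implicit. Your flagged concern is also resolved as you expect: ``mixing'' here is ergodic-theoretic mixing of the shift (the paper verifies it for its Markov chain via Theorem 8.6 of \cite{B} before invoking the proposition), which is the hypothesis Bradley's theorem consumes.
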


\begin{proof}[Proof of Claim \ref{C5}]
Fix $k\ge3$ and for every $\mathbf{a}\in\mathbb{N}^{k}$ and $c\in\mathbb{N}$
set
\[
p_{\mathbf{a}}=\mu_{G}(I_{\mathbf{a}})\mbox{ and }p_{\mathbf{a},c}=\frac{\mu_{G}(I_{\mathbf{a}c})}{\mu_{G}(I_{\mathbf{a}})}\:.
\]
Then $\sum_{c\in\mathbb{N}}p_{\mathbf{a},c}=1$ for each $\mathbf{a}\in\mathbb{N}^{k}$
and $p=\{p_{\mathbf{a}}\}_{\mathbf{a}\in\mathbb{N}^{k}}$ is a probability
vector. Let $\{A_{n}\}_{n=1}^{\infty}$ be the $k$-step $\mathbb{N}$-valued
Markov chain corresponding to the transition probabilities $\{p_{\mathbf{a},c}\}_{(\mathbf{a},c)\in\mathbb{N}^{k+1}}$
and initial distribution $\{p_{\mathbf{a}}\}_{\mathbf{a}\in\mathbb{N}^{k}}$.
For each $\mathbf{b}\in\mathbb{N}^{k-1}$ and $d\in\mathbb{N}$
\[
\sum_{c\in\mathbb{N}}p_{c\mathbf{b}}\cdot p_{c\mathbf{b},d}=\sum_{c\in\mathbb{N}}\mu_{G}(I_{c\mathbf{b}})\cdot\frac{\mu_{G}(I_{c\mathbf{b}d})}{\mu_{G}(I_{c\mathbf{b}})}=\mu_{G}(T^{-1}(I_{\mathbf{b}d}))=p_{\mathbf{b}d},
\]
hence $\{A_{n}\}_{n=1}^{\infty}$ is stationary. Considering $\{A_{n}\}_{n=1}^{\infty}$
as a $1$-step Markov chain on the state space $\mathbb{N}^{k}$,
it is easy to see it is irreducible and aperiodic. From this and Theorem
8.6 in \cite{B} it follows $\{A_{n}\}_{n=1}^{\infty}$ is mixing.

Let us show $\{A_{n}\}_{n=1}^{\infty}$ is in fact $\psi$-mixing.
From (3.22) in chapter 3 of \cite{EW} it follows there exists a constant
$1<C<\infty$ with,
\begin{equation}
C^{-l}\le\frac{\mu_{G}(I_{(a_{1},...,a_{l})})}{\mu_{G}(I_{a_{1}})\cdot...\cdot\mu_{G}(I_{a_{l}})}\le C^{l}\:\mbox{ for }l\ge1\mbox{ and }a_{1},...,a_{l}\in\mathbb{N}\:.\label{E15}
\end{equation}
For $l,m>k$ , $(a_{1},...,a_{l})=\mathbf{a}\in\mathbb{N}^{l}$ and
$(b_{1},...,b_{m})=\mathbf{b}\in\mathbb{N}^{m}$ set
\[
R:=\frac{\mathbb{P}\{A_{1}...A_{l+m}=\mathbf{ab}\}}{\mathbb{P}\{A_{1}...A_{l}=\mathbf{a}\}\mathbb{P}\{A_{1}...A_{m}=\mathbf{b}\}},
\]
then
\[
R=\frac{1}{\mu_{G}(I_{(b_{1},...,b_{k})})}\cdot\prod_{j=1}^{k}\frac{\mu_{G}(I_{(a_{l-k+j},...,a_{l},b_{1},...,b_{j})})}{\mu_{G}(I_{(a_{l-k+j},...,a_{l},b_{1},...,b_{j-1})})}\:.
\]
This together with (\ref{E15}) gives
\[
C^{-2k(k+1)}\le R\le C^{2k(k+1)}\:.
\]
From Proposition \ref{P5.1}, combined with a monotone class argument,
it now follows that $\{A_{n}\}_{n=1}^{\infty}$ is $\psi$-mixing.

Let $\nu$ be the distribution of $[A_{1},A_{2},...]$, then $\nu$
is $T$-invariant and ergodic. In order to prove the claim it remains
to show that $\dim_{H}\nu\ge1-2^{3-k}$. Set
\[
\xi=\{I_{c}\::\:c\in\mathbb{N}\},
\]
then it is easy to check that
\[
H_{\nu}(\xi)=H_{\mu_{G}}(\xi)<\infty
\]
and
\[
\sum_{c\in\mathbb{N}}\nu(I_{c})\log c=\sum_{c\in\mathbb{N}}\mu_{G}(I_{c})\log c<\infty,
\]
which shows $h_{\nu},\gamma_{\nu},h_{\mu_{G}}\mbox{ and }\gamma_{\mu_{G}}$
are all finite. From this and Section 2 of \cite{BH} it follows that
\begin{equation}
\dim_{H}\nu=\frac{h_{\nu}}{\gamma_{\nu}}\mbox{ and }1=\dim_{H}\mu_{G}=\frac{h_{\mu_{G}}}{\gamma_{\mu_{G}}}\:.\label{E11}
\end{equation}
Moreover, it is well known
\begin{equation}
\gamma_{\mu_{G}}=-\frac{2}{\log2}\int\frac{\log x}{1+x}\:dx=\frac{\pi^{2}}{6\log2}>2\:.\label{E14}
\end{equation}

By an argument similar to the one given in Theorem 4.27 in \cite{W2},
\[
h_{\nu}=-\sum_{\mathbf{a}\in\mathbb{N}^{k}}\sum_{c\in\mathbb{N}}p_{\mathbf{a}}p_{\mathbf{a},c}\log p_{\mathbf{a},c}\:.
\]
From this and the definition of conditional entropy,
\[
h_{\nu}=H_{\mu_{G}}(\vee_{j=0}^{k}T^{-j}\xi\mid\vee_{j=0}^{k-1}T^{-j}\sigma(\xi))\:.
\]
Now from Theorems 4.3 and 4.14 in \cite{W2},
\begin{multline}
h_{\nu}=H_{\mu_{G}}(\vee_{j=0}^{k}T^{-j}\xi)-H_{\mu_{G}}(\vee_{j=1}^{k}T^{-j}\xi)\\
=H_{\mu_{G}}(\xi\mid\vee_{j=1}^{k}T^{-j}\sigma(\xi))\ge H_{\mu_{G}}(\xi\mid\vee_{j=1}^{\infty}T^{-j}\sigma(\xi))=h_{\mu_{G}}\:.\label{E12}
\end{multline}

Assume $k$ is even for the moment, then
\[
[a_{1},...,a_{k}]\le x\le[a_{1},...,a_{k}+1]
\]
for every $(a_{1},...,a_{k})=\mathbf{a}=\mathbb{N}^{k}$ and $x\in I_{\mathbf{a}}$.
It follows that,
\begin{multline*}
\gamma_{\nu}-\gamma_{\mu_{G}}=-2\int_{X}\log x\:d\nu(x)+2\int_{X}\log x\:d\mu_{G}(x)\\
=2\sum_{\mathbf{a}\in\mathbb{N}^{k}}\left(\int_{I_{\mathbf{a}}}\log\frac{1}{x}\:d\nu(x)+\int_{I_{\mathbf{a}}}\log x\:d\mu_{G}(x)\right)\\
\le2\sum_{a_{1},...,a_{k}\in\mathbb{N}}\left(\int_{I_{(a_{1},...,a_{k})}}\log\frac{1}{[a_{1},...,a_{k}]}\:d\nu(x)+\int_{I_{(a_{1},...,a_{k})}}\log[a_{1},...,a_{k}+1]\:d\mu_{G}(x)\right)\\
=2\sum_{a_{1},...,a_{k}\in\mathbb{N}}\mu_{G}(I_{(a_{1},...,a_{k})})\cdot\log\frac{[a_{1},...,a_{k}+1]}{[a_{1},...,a_{k}]}\:.
\end{multline*}
Fix $a_{1},...,a_{k}\in\mathbb{N}$, then
\[
\log\frac{[a_{1},...,a_{k}+1]}{[a_{1},...,a_{k}]}\le\frac{[a_{1},...,a_{k}+1]-[a_{1},...,a_{k}]}{[a_{1},...,a_{k}]}\:.
\]
Let $p,q\in\mathbb{N}$ be with $\mathrm{gcd}(p,g)=1$ and $\frac{p}{q}=[a_{1},...,a_{k}]$.
From inequalities (3.6), (3.7) and (3.14) in \cite{EW} it follows
that $q,p\ge2^{(k-2)/2}$ and
\[
[a_{1},...,a_{k}+1]-[a_{1},...,a_{k}]\le q^{-2}\:.
\]
Hence
\[
\log\frac{[a_{1},...,a_{k}+1]}{[a_{1},...,a_{k}]}\le\frac{1/q^{2}}{p/q}=\frac{1}{pq}\le2^{2-k},
\]
and so $\gamma_{\nu}-\gamma_{\mu_{G}}\le2^{3-k}$. By exchanging between
$\gamma_{\mu_{G}}$ and $\gamma_{\nu}$ it can be shown that $\gamma_{\mu_{G}}-\gamma_{\nu}\le2^{3-k}$.
From $k\ge3$ and (\ref{E14}) we get $\gamma_{\nu}\ge1$, hence
\begin{equation}
1\le\frac{\gamma_{\mu_{G}}}{\gamma_{\nu}}+\frac{2^{3-k}}{\gamma_{\nu}}\le\frac{\gamma_{\mu_{G}}}{\gamma_{\nu}}+2^{3-k}\:.\label{E13}
\end{equation}
A similar argument shows (\ref{E13}) holds when $k$ is odd. From
(\ref{E11}), (\ref{E12}) and (\ref{E13}) we now get
\[
\dim_{H}\nu=\frac{\gamma_{\mu_{G}}}{\gamma_{\nu}}\cdot\frac{h_{\nu}}{\gamma_{\mu_{G}}}\ge(1-2^{3-k})\cdot\frac{h_{\mu_{G}}}{\gamma_{\mu_{G}}}=1-2^{3-k},
\]
which completes the proof of the claim.
\end{proof}

\section{\label{S6}Extension of results for $f$-expansions}

With almost no change, Theorems \ref{T3} and \ref{T6} extend to
the more general setup of $f$-expansions, which we now define. Let
$M\in\{2,3,...\}\cup\{\infty\}$. Let $f$ be either a strictly decreasing
continuous function defined on $[1,M+1]$ with $f(1)=1$ and $f(M+1)=0$,
or a strictly increasing continuous function defined on $[0,M]$ with
$f(0)=0$ and $f(M)=1$. For $x\in(0,1)$ set $r_{0}(x)=x$ and $r_{i+1}(x)=\left\{ f^{-1}(r_{i}(x))\right\} $
for $i\ge0$, where $\left\{ \cdot\right\} $ denotes the fractional
part of a number. Let $X$ be the set of all $x\in(0,1)$ with $r_{i}(x)\ne0$
for every $i\ge0$, then $(0,1)\setminus X$ is clearly countable.
Write
\[
\mathcal{N}=\{[y]\::\:y\in f^{-1}(0,1)\},
\]
where $\left[\cdot\right]$ is the integer part of a number. For $x\in X$
and $i\ge1$ set
\[
\alpha_{i}(x)=\left[f^{-1}(r_{i-1}(x))\right],
\]
then $\alpha_{i}(x)\in\mathcal{N}$. We shall assume that
\begin{equation}
x=f(\alpha_{1}(x)+f(\alpha_{2}(x)+f(\alpha_{3}(x)+...)))\mbox{ for all }x\in X,\label{E90}
\end{equation}
and call the expression on the right hand side the $f$-expansion
of $x$. Regularity conditions on $f$ were given by Rényi \cite{R},
which ensure that (\ref{E90}) is satisfied. The main example of the
decreasing case is $f(x)=1/x$, which leads to the continued fraction
expansion, and of the increasing case is $f(x)=x/M,$ which leads
to the base-$M$ expansion. For more details on $f$-expansions see
\cite{R}, \cite{KP}, \cite{H} and the references therein.

We use the notation $I_{\mathbf{a}}$ and $\mathbb{I}_{\mathbf{a}}$,
introduced in Section \ref{S2}, with $X$ and $\alpha_{i}$ as defined
in this section and $\mathbf{a}\in\cup_{k=1}^{\infty}\mathcal{N}^{k}$.
For $x\in(0,1)$ set $Tx=f^{-1}x-\left[f^{-1}x\right]$, then $\alpha_{i}(Tx)=\alpha_{i+1}(x)$
for $x\in X$. We shall assume that
\begin{enumerate}
\item \label{C1}the restriction of $T$ to $f(a,a+1)$ is $C^{2}$ for
each $a\in\mathcal{N}$;
\item there exists $\ell\in\mathbb{N}$ and $\beta>0$ with $|(T^{\ell})'(x)|\ge\beta$
for all $x\in X$;
\item \label{C3}there exists $1\le Q<\infty$ with $\left|\frac{T''(x)}{T'(y)T'(z)}\right|\le Q$
for all $a\in\mathcal{N}$ and $x,y,z\in I_{a}$.
\end{enumerate}
Then by Theorem 22 in \cite{W}, there exists an absolutely continuous
$T$-invariant mixing probability measure $\mu_{T}$ on $X$, such
that $0<\frac{d\mu_{T}}{d\mathcal{L}}\in C[0,1]$. Here, as above,
$\mathcal{L}$ is the Lebesgue measure.

For $q\in\mathcal{Q}_{L}$ with $\mathcal{Q}_{L}$ defined in Section
2, $\mathbf{a}\in\cup_{k=1}^{\infty}\mathcal{N}^{k}$ and $\delta>0$
let
\[
\Gamma_{q,\mathbf{a}}^{\delta}=\{x\in X\::\:\underset{n\rightarrow\infty}{\liminf}\:\left|\frac{1}{n}\sum_{i=1}^{n}\mathbb{I}_{\mathbf{a}}(T^{q(i)}x)-\mu_{T}(I_{\mathbf{a}})\right|>\delta\}\:.
\]
The following theorem is an analogue of Theorem \ref{T6}, and can
be proven in exactly the same manner.
\begin{thm}
\label{T6.1}Suppose that $T$ satisfies conditions (\ref{C1})-(\ref{C3})
and assume, in addition, that for some $t<1$,
\begin{equation}
\underset{x\in X}{\sup}\:\sum_{y:Ty=x}|T'(y)|^{-t}<\infty\:.\label{E91}
\end{equation}
Then for every $L>1$ and $\delta>0$ there exists $0<c_{f,L,\delta}<1$
with
\[
\sup\{\dim_{H}(\Gamma_{q,\mathbf{a}}^{\delta})\::\:q\in\mathcal{Q}_{L},\:\mathbf{a}\in\cup_{k=1}^{\infty}\mathcal{N}^{k}\}\le1-c_{f,L,\delta}\:.
\]

\end{thm}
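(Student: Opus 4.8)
**The plan is to mimic the proof of Theorem \ref{T6} verbatim, replacing the Gauss-specific inputs with their $f$-expansion analogues.** The proof of Theorem \ref{T6} rested on three ingredients: (i) the large deviation estimate of Lemma \ref{L3.1}, applied through Corollary \ref{P8} to the $\psi$-mixing sequence of digits under the invariant measure; (ii) the fact that $\dim_{H}\mathcal{E}_{\lambda}<1$ for a suitable $\lambda>0$, where $\mathcal{E}_{\lambda}$ collects points whose cylinders shrink exponentially fast along a subsequence; and (iii) the comparability $r:=\min_{[0,1]}\frac{d\mu_{G}}{d\mathcal{L}}>0$ together with $\beta_{n}\to0$. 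The whole argument is then a covering/Hausdorff-measure computation with the exponent $s=1-\frac{1}{\lambda LM}$. I would reproduce this scheme with $\mu_{G}$ replaced by $\mu_{T}$ and $\mathbb{N}^{k}$ replaced by $\mathcal{N}^{k}$.

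\textbf{First}, I would establish the analogue of Corollary \ref{P8}. Under conditions (\ref{C1})--(\ref{C3}), Theorem 22 in \cite{W} yields a mixing invariant measure $\mu_{T}$ with continuous, strictly positive density; by the same bounded-density/Gibbs estimates that make the continued fraction digits $\psi$-mixing under $\mu_{G}$, the digit sequence $\{\alpha_{i}\}$ is $\psi$-mixing (hence $*$-mixing and stationary) under $\mu_{T}$. Feeding this into Lemma \ref{L3.1} with $F=\mathbb{I}_{\mathbf{a}}$ gives, for each $k\ge1$ and $\delta>0$, a constant $M=M(\delta,k)>1$ independent of the strictly increasing $q$ and of $\mathbf{a}\in\mathcal{N}^{k}$, with
\[
\mu_{T}\!\left\{x\in X:\left|\tfrac{1}{n}\sum_{i=1}^{n}\mathbb{I}_{\mathbf{a}}(T^{q(i)}x)-\mu_{T}(I_{\mathbf{a}})\right|>\delta\right\}\le M\cdot e^{-n/M}.
\]

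\textbf{Second}, I would supply the analogue of Theorem 4.1 in \cite{KPW}, namely the existence of $\lambda>0$ with $\dim_{H}\mathcal{E}_{\lambda}<1$, where $\mathcal{E}_{\lambda}=\cap_{j}\cup_{n\ge j}\{x:|J_{n}(x)|\le e^{-\lambda n}\}$ and $J_{n}(x)$ is now the order-$n$ cylinder of the $f$-expansion. \textbf{This is exactly the step where hypothesis (\ref{E91}) enters, and it is the main obstacle.} The point is that $\mathcal{E}_{\lambda}$ is covered by cylinders of length $\le e^{-\lambda n}$, and to bound its Hausdorff dimension one needs control on $\sum_{\mathbf{a}\in\mathcal{N}^{n}}|I_{\mathbf{a}}|^{t}$ for some $t<1$. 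Since $|I_{\mathbf{a}}|\asymp|(T^{n})'|^{-1}$ on $I_{\mathbf{a}}$ by bounded distortion coming from (\ref{C1})--(\ref{C3}), the assumption $\sup_{x}\sum_{y:Ty=x}|T'(y)|^{-t}<\infty$ makes the single-step transfer operator at parameter $t$ bounded; iterating gives $\sum_{\mathbf{a}\in\mathcal{N}^{n}}|I_{\mathbf{a}}|^{t}\le K^{n}$ for some finite $K$, and hence a pressure/dimension estimate forcing $\dim_{H}\mathcal{E}_{\lambda}\le s'<1$ once $\lambda$ is chosen large enough relative to $\log K$. (In the classical continued fraction case this boundedness is automatic because $\sum_{a}a^{-2t}<\infty$ for $t>\tfrac12$, which is why no extra hypothesis appears in Theorem \ref{T6}.)

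\textbf{Finally}, with these two inputs in hand the remainder is identical to the proof of Theorem \ref{T6}. I would fix $\delta,L$, set $r=\min_{[0,1]}\frac{d\mu_{T}}{d\mathcal{L}}>0$ and $s=1-\frac{1}{\lambda LM(\delta,k)}$, and for each $N$ cover $\Gamma_{q,\mathbf{a}}^{\delta,N}$ by the cylinders $\{I_{\mathbf{b}}\}_{\mathbf{b}\in B_{n}}$ making up $\Upsilon_{q,\mathbf{a}}^{\delta,n}$ for a well-chosen $n\ge N$ with $q(n)<nL$ and $\beta_{n}<\eta$. The density bound converts the $\mu_{T}$-deviation estimate into $\sum_{\mathbf{b}}|I_{\mathbf{b}}|\le r^{-1}M e^{-n/M}$, the lower bound $|I_{\mathbf{b}}|\ge e^{-\lambda(q(n)+k)}$ controls the $(s-1)$-power, and the choice of $s$ makes the exponent in $n$ cancel, giving $\mathcal{H}^{s}(\Gamma_{q,\mathbf{a}}^{\delta,N})<\infty$ and thus $\dim_{H}(\Gamma_{q,\mathbf{a}}^{\delta}\setminus\mathcal{E}_{\lambda})\le1-\frac{1}{\lambda L M(\delta,k)}$. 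The reduction to finitely many word-lengths $k\le k_{\delta}$, via $k_{\delta}=\inf\{k:\sup_{\mathbf{a}\in\mathcal{N}^{k}}\mu_{T}(I_{\mathbf{a}})<\delta/2\}$ (finite because $\mu_{T}$ has bounded density and $\beta_{n}\to0$) and the containment $\Gamma_{q,\mathbf{a}_{\delta}}^{\delta/2}\supset\Gamma_{q,\mathbf{a}}^{\delta}$, then yields the uniform bound $1-c_{f,L,\delta}$ with $c_{f,L,\delta}=\min\{1-\dim_{H}\mathcal{E}_{\lambda},\ \min_{1\le k\le k_{\delta}}\frac{1}{\lambda L M(\delta/2,k)}\}$.
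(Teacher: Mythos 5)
Your proposal is correct and takes essentially the same route as the paper: the paper's own proof of Theorem \ref{T6.1} consists precisely of the observations that the digits $\{\alpha_{i}\}_{i=1}^{\infty}$ are $\psi$-mixing under $\mu_{T}$ (cited from \cite{A}, \cite{H}), so that Corollary \ref{P8} holds for $\mu_{T}$, that condition (\ref{E91}) is exactly what is needed to apply Theorem 4.1 of \cite{KPW} and obtain $\dim_{H}\mathcal{E}_{\lambda}<1$, and that the remainder then follows the proof of Theorem \ref{T6} almost verbatim. The only difference is that where the paper cites these two ingredients, you sketch direct arguments for them (the Gibbs-type mixing estimate and the transfer-operator bound giving $\sum_{\mathbf{a}\in\mathcal{N}^{n}}|I_{\mathbf{a}}|^{t}\le K^{n}$); both sketches are consistent with the cited results and do not affect the correctness of the overall argument.
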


\begin{rem}
The condition (\ref{E91}) is needed in order to apply Theorem 4.1
from \cite{KPW}, as we did at the beginning of the proof of Theorem
\ref{T6}. Since $\{\alpha_{i}\}_{i=1}^{\infty}$ is a $\psi$-mixing
sequence with respect to $\mu_{T}$ (see \cite{A} or \cite{H}),
the large deviations estimate from Corollary \ref{P8} is valid for
$\mu_{T}$. Now the proof of Theorem \ref{T6.1} follows almost verbatim
the proof of Theorem \ref{T6}.
\end{rem}

An important ingredient in the proof of Theorem \ref{T3} is the fact
that, for any $k\ge0$, the continued fraction digits under $\mu_{G}$
do not form a $k$-step Markov chain. Hence, in order to generalize
Theorem \ref{T3} to the case of $f$-expansions we shall need the
following lemma. For $t\in[0,1]$ set $F(t)=\mu_{T}\left([0,t]\right)$,
and let $S=F\circ T\circ F^{-1}$. Since $F'=\frac{d\mu_{T}}{d\mathcal{L}}\in C[0,1]$
with $\frac{d\mu_{T}}{d\mathcal{L}}>0$, $F$ is a diffeomorphism
of $[0,1]$ onto itself. Given $a\in\mathcal{N}$ write $\widetilde{I_{a}}:=f(a,a+1)$.
\begin{lem}
\label{L6.3}Assume the digits $\{\alpha_{i}\}_{i=1}^{\infty}$ of
the $f$-expansion are not independent under $\mu_{T}$. Then $\{\alpha_{i}\}_{i=1}^{\infty}$
do not form a $k$-step Markov chain under $\mu_{T}$ for any $k\ge1$.
\end{lem}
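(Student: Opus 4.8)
The plan is to prove the contrapositive: if $\{\alpha_{i}\}_{i=1}^{\infty}$ is a $k$-step Markov chain under $\mu_{T}$ for some $k\ge1$, then the digits are in fact independent. The whole argument is carried out in the coordinates given by $F$, where the dynamics becomes Lebesgue-preserving. Since $F(t)=\mu_{T}([0,t])$ satisfies $F_{*}\mu_{T}=\mathcal{L}$, the conjugate map $S=F\circ T\circ F^{-1}$ preserves $\mathcal{L}$. Writing $\widetilde{J}_{\mathbf{a}}:=F(I_{\mathbf{a}})$ for $\mathbf{a}\in\cup_{m\ge1}\mathcal{N}^{m}$, these are intervals with $\mathcal{L}(\widetilde{J}_{\mathbf{a}})=\mu_{T}(I_{\mathbf{a}})$ which tile $(0,1)$ up to a countable set, and $\alpha_{i}$ is exactly the $S$-itinerary with respect to the partition $\{\widetilde{J}_{a}\}_{a\in\mathcal{N}}$. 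Because $f(1)=1,f(M+1)=0$ (resp.\ $f(0)=0,f(M)=1$), each branch $T|_{\widetilde{I}_{a}}$ maps onto $(0,1)$, hence so does $S|_{\widetilde{J}_{a}}$; every word is admissible and all cylinders have positive measure. For $a\in\mathcal{N}$ I set $g_{a}:=(S|_{\widetilde{J}_{a}})^{-1}$, which by the smoothness assumptions on $T$ and $F'=\frac{d\mu_{T}}{d\mathcal{L}}\in C[0,1]$ is a $C^{1}$ diffeomorphism onto $\widetilde{J}_{a}$ with $g_{a}'$ continuous and nowhere zero. From $\alpha_{i}(Tx)=\alpha_{i+1}(x)$ one gets the self-similar relation $\widetilde{J}_{a\mathbf{a}}=g_{a}(\widetilde{J}_{\mathbf{a}})$, and therefore $\mu_{T}(I_{a\mathbf{a}})=\int_{\widetilde{J}_{\mathbf{a}}}|g_{a}'|\,d\mathcal{L}$.

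First I would translate the Markov hypothesis into a statement about a single inverse branch. Fix $\mathbf{w}\in\mathcal{N}^{k}$ and a digit $a$. By stationarity the $k$-step Markov property gives, for every finite word $\mathbf{z}$,
\[
\frac{\mu_{T}(I_{a\mathbf{w}\mathbf{z}})}{\mu_{T}(I_{a\mathbf{w}})}=\frac{\mu_{T}(I_{\mathbf{w}\mathbf{z}})}{\mu_{T}(I_{\mathbf{w}})},
\]
since the past $a\mathbf{w}$ has last $k$ digits $\mathbf{w}$. Using $\mu_{T}(I_{a\mathbf{w}\mathbf{z}})=\int_{\widetilde{J}_{\mathbf{w}\mathbf{z}}}|g_{a}'|\,d\mathcal{L}$ and $\mu_{T}(I_{\mathbf{w}\mathbf{z}})=\mathcal{L}(\widetilde{J}_{\mathbf{w}\mathbf{z}})$, this says that the normalized measures $\frac{|g_{a}'|\,d\mathcal{L}}{\int_{\widetilde{J}_{\mathbf{w}}}|g_{a}'|\,d\mathcal{L}}$ and $\frac{d\mathcal{L}}{\mathcal{L}(\widetilde{J}_{\mathbf{w}})}$ on $\widetilde{J}_{\mathbf{w}}$ assign the same mass to every subcylinder $\widetilde{J}_{\mathbf{w}\mathbf{z}}$. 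The subcylinders $\{\widetilde{J}_{\mathbf{w}\mathbf{z}}\}_{\mathbf{z}}$ are nested intervals (hence a $\pi$-system) with diameters tending to $0$ by the eventual expansion of $T$, so they generate the Borel sets of $\widetilde{J}_{\mathbf{w}}$; the two measures therefore coincide, and since $|g_{a}'|$ is continuous and positive it must be constant on $\widetilde{J}_{\mathbf{w}}$.

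Next I would promote this to a global statement. The previous step shows that for every $a\in\mathcal{N}$ the continuous function $g_{a}'$ is constant on each level-$k$ cylinder $\widetilde{J}_{\mathbf{w}}$, $\mathbf{w}\in\mathcal{N}^{k}$. These cylinders are intervals tiling $(0,1)$ up to a countable set, so consecutive ones share an endpoint; continuity of $g_{a}'$ forces the constant values on adjacent cylinders to agree, and since the cylinders form a single chain this common value propagates across all of them. Hence each $g_{a}'$ is globally constant, i.e.\ every branch $g_{a}$ is affine with slope $\mu_{T}(I_{a})$. Consequently $\mu_{T}(I_{a_{1}\cdots a_{n}})=\mathcal{L}(g_{a_{1}}\circ\cdots\circ g_{a_{n}}((0,1)))=\prod_{j}\mu_{T}(I_{a_{j}})$, which together with stationarity is exactly independence of the digits, contradicting the hypothesis and completing the contrapositive.

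I expect the main obstacle to be the passage from ``constant on every level-$k$ cylinder'' to ``globally constant'': this requires combining the continuity of $g_{a}'$ with the precise geometry of the cylinder partition --- that the cylinders are intervals abutting at shared endpoints and form a connected chain --- and taking care at the points where infinitely many cylinders accumulate (as the $I_{a}$ do near $0$ for continued fractions), where it is the chain structure, rather than continuity at the accumulation point itself, that transmits the common value. By comparison, the translation of the Markov condition in the first step, and the verification that the subcylinders generate the Borel $\sigma$-algebra, are routine consequences of the expansion and smoothness assumptions.
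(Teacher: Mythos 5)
Your proof is correct and takes essentially the same route as the paper: conjugate by $F$ so that $S=F\circ T\circ F^{-1}$ preserves Lebesgue measure, use the (automatically stationary) $k$-step Markov property to force the derivative of each inverse branch to be constant on every level-$k$ cylinder, and then invoke continuity together with the interval structure of the cylinder partition to make that derivative globally constant, i.e.\ the branches affine, i.e.\ the digits independent. The paper packages the first step as the a.e.\ identity $\mathcal{L}\{\beta_{1}=c\mid\sigma\{\beta_{2},\beta_{3},\ldots\}\}=\left(S'\right)^{-1}$ on $J_{c}$ and argues by contradiction (two level-$k$ cylinders with different conditional probabilities versus constancy of $S'$ on $F\widetilde{I_{c}}$), whereas you compare normalized measures on subcylinders and prove the contrapositive; this is the same computation in different clothing, with the minor advantage on the paper's side that working with $S'$ rather than $g_{a}'$ avoids needing $S'$ to be nonvanishing at the countably many cylinder endpoints.
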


\begin{proof}
Note that $F\mu_{T}=\mathcal{L}$ and $S\mathcal{L}=\mathcal{L}$.
From the chain rule it follows that for every $a\in\mathcal{N}$ and
$x\in F\widetilde{I_{a}}$,
\[
S'(x)=F'(TF^{-1}x)T'(F^{-1}x)\left(F'(F^{-1}x)\right)^{-1},
\]
and so $S'$ is continuous on $F\widetilde{I_{a}}$. Let $\beta_{1}:FX\rightarrow\mathcal{N}$
be such that $\beta_{1}(x)=a$ for $a\in\mathcal{N}$ and $x\in FI_{a}$.
For $i\ge1$ set $\beta_{i}=\beta_{1}\circ S^{i-1}$, then $\beta_{i}=\alpha_{i}\circ F^{-1}$.
Given $(a_{1},...,a_{l})=\mathbf{a}\in\mathcal{N}^{l}$ let
\[
J_{\mathbf{a}}=\{x\in FX\::\:\beta_{i}(x)=a_{i}\mbox{ for }1\le i\le l\},
\]
then $J_{\mathbf{a}}=FI_{\mathbf{a}}$. Note that 
\begin{equation}
\mathcal{L}(J_{\mathbf{a}})=\mu_{T}(I_{\mathbf{a}})\mbox{ for every }l\ge1\mbox{ and }\mathbf{a}\in\mathcal{N}^{l}\:.\label{E99}
\end{equation}

Let $k\ge1$ and assume by contradiction that $\{\alpha_{i}\}_{i=1}^{\infty}$
forms a $k$-step Markov chain under $\mu_{T}$. From this, since
$\{\alpha_{i}\}_{i=1}^{\infty}$ are not independent, and from (\ref{E99}),
it follows that under $\mathcal{L}$ the variables $\{\beta_{i}\}_{i=1}^{\infty}$
form a stationary $k$-step Markov chain but are not independent.
Since $\{\beta_{i}\}_{i=1}^{\infty}$ is a stationary $k$-step Markov
chain,
\[
\mathcal{L}\{\beta_{1}=c\mid S^{-1}(J_{\mathbf{b}})\}=\mathcal{L}\{\beta_{1}=c\mid S^{-1}(J_{\mathbf{bf}})\}
\]
for every $c\in\mathcal{N}$, $\mathbf{b}\in\mathcal{N}^{k}$, $l\ge1$
and $\mathbf{f}\in\mathcal{N}^{l}$. It follows there exist $c\in\mathcal{N}$
and $\mathbf{b},\mathbf{d}\in\mathcal{N}^{k}$ with,
\begin{equation}
\mathcal{L}\{\beta_{1}=c\mid S^{-1}(J_{\mathbf{b}})\}\ne\mathcal{L}\{\beta_{1}=c\mid S^{-1}(J_{\mathbf{d}})\},\label{E97}
\end{equation}
otherwise it would hold that $\{\beta_{i}\}_{i=1}^{\infty}$ are independent
under $\mathcal{L}$.

It is not hard to see that for $\mathcal{L}$-a.e. $x\in J_{c}$,
\begin{equation}
\mathcal{L}\{\beta_{1}=c\mid\sigma\{\beta_{2},\beta_{3},...\}\}(x)=\left(S'(x)\right)^{-1},\label{E98}
\end{equation}
where the left hand side is the conditional $\mathcal{L}$-probability
of the event $\{\beta_{1}=c\}$ with respect to the $\sigma$-algebra
$\sigma\{\beta_{2},\beta_{3},...\}$. Let $\mathbf{a}\in\mathcal{N}^{k}$.
Then since $\{\beta_{i}\}_{i=1}^{\infty}$ is a $k$-step Markov chain
under $\mathcal{L}$, it follows for $\mathcal{L}$-a.e. $x\in J_{c\mathbf{a}}$
that
\begin{multline*}
\mathcal{L}\{\beta_{1}=c\mid\sigma\{\beta_{2},\beta_{3},...\}\}(x)\\
=\mathcal{L}\{\beta_{1}=c\mid\sigma\{\beta_{2},...,\beta_{k+1}\}\}(x)=\mathcal{L}\{\beta_{1}=c\mid S^{-1}(J_{\mathbf{a}})\}\:.
\end{multline*}
This together with (\ref{E98}) shows that
\begin{equation}
\left(S'(x)\right)^{-1}=\mathcal{L}\{\beta_{1}=c\mid S^{-1}(J_{\mathbf{a}})\}\mbox{ for }\mathcal{L}\mbox{-a.e. }x\in J_{c\mathbf{a}}\:.\label{E89}
\end{equation}
Since $S'$ is continuous on $F\widetilde{I_{c}}$ and
\[
F(\widetilde{I_{c}}\cap X)=\cup_{\mathbf{a}\in\mathcal{N}^{k}}J_{c\mathbf{a}},
\]
it follows easily from (\ref{E89}) that $S'$ must be constant on
$F\widetilde{I_{c}}$. On the other hand, by (\ref{E97}) and (\ref{E89})
this is not possible. We have thus reached a contradiction, which
shows that $\{\alpha_{i}\}_{i=1}^{\infty}$ does not form a $k$-step
Markov chain under $\mu_{T}$.
\end{proof}

\begin{rem}
In Proposition 7.1 from \cite{KPW} it is shown that $\{\alpha_{i}\}_{i=1}^{\infty}$
are independent under $\mu_{T}$ if and only if $S$ is linear on
$F\widetilde{I_{a}}$ for each $a\in\mathcal{N}$. From this and Lemma
\ref{L6.3} it follows that if $S$ is not linear on $F\widetilde{I_{a}}$
for some $a\in\mathcal{N}$, then $\{\alpha_{i}\}_{i=1}^{\infty}$
do not form a $k$-step Markov chain under $\mu_{T}$ for any $k\ge0$.
\end{rem}

The following theorem is an analogue, for the case of $f$-expansions,
of Theorem \ref{T3} above and Corollary 2.3 from \cite{KPW}. It
can be derived from Theorem \ref{T6.1}, Theorem 2.1 in \cite{KPW},
and Lemma \ref{L6.3}, by an argument similar to the one given in
the proof of Theorem \ref{T3}. Given $a_{1},a_{2},...\in\mathcal{N}$
denote by $[a_{1},a_{2},...]$ the unique $x\in X$ with $\alpha_{i}(x)=a_{i}$
for $i\ge1$.
\begin{thm}
Suppose that $T$ satisfies the conditions (\ref{C1})-(\ref{C3})
and, in addition, that (\ref{E91}) holds for some $t<1$. Assume
the digits $\{\alpha_{i}\}_{i=1}^{\infty}$ of the $f$-expansion
are not independent under $\mu_{T}$. Let $k\ge0$ and let $\{A_{n}\}_{n=1}^{\infty}$
be an $\mathcal{N}$-valued $k$-step Markov chain (when $k=0$ this
means $A_{1},A_{2},...$ are independent). Assume $\{A_{n}\}_{n=1}^{\infty}$
is $*$-mixing or that it is stationary and ergodic. Let $\nu$ be
the distribution of the random variable $[A_{1},A_{2},...]$. Then
$\dim_{H}(\nu)\le1-c_{f,k}$, where $0<c_{f,k}<1$ is a constant depending
only on $f$ and $k$.
\end{thm}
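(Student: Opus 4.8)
The plan is to run the argument of Theorem \ref{T3} almost verbatim, with $\mu_{T}$ in place of $\mu_{G}$, $\mathcal{N}$ in place of $\mathbb{N}$, and Theorem \ref{T6.1} in place of Theorem \ref{T6}, branching into the two hypotheses on $\{A_{n}\}$ only where a law of large numbers is used. First I would fix, independently of $\nu$, a witness to the failure of the $k$-step Markov property of $\mu_{T}$. For $k\ge1$ Lemma \ref{L6.3} (using the standing hypothesis that the digits are not independent under $\mu_{T}$) shows $\{\alpha_{i}\}$ is not a $k$-step Markov chain under $\mu_{T}$, while for $k=0$ the non-independence is itself this failure. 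Hence, exactly as for (\ref{E8}), there are $m\in\mathbb{N}$, $\mathbf{a}\in\mathcal{N}^{k}$, $\mathbf{b}\in\mathcal{N}^{m}$ and $c\in\mathcal{N}$ with
\[
\delta:=\left|\mu_{T}(I_{\mathbf{ba}c})-\frac{\mu_{T}(I_{\mathbf{a}c})\cdot\mu_{T}(I_{\mathbf{ba}})}{\mu_{T}(I_{\mathbf{a}})}\right|>0
\]
(when $k=0$ the word $\mathbf{a}$ is empty and $I_{\mathbf{a}}=X$). I then choose $\mu_{T}(I_{\mathbf{a}})>\epsilon>0$ so that the continuity estimate (\ref{E5}) holds with $\mu_{T}$ in place of $\mu_{G}$. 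The point is that $\delta$ and $\epsilon$ depend only on $f$ and $k$, never on $\nu$.

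In the $*$-mixing case I would copy Section \ref{S4} line for line. With $p_{\mathbf{d},i}=\mathbb{P}(E_{\mathbf{d},i})$, I run the density dichotomy on the words $\mathbf{ba}$, $\mathbf{a}c$ and $\mathbf{a}$. If for one of these words, in one direction, the upper density of indices $i$ with $p_{\mathbf{d},i}$ at distance more than $\epsilon$ from $\mu_{T}(I_{\mathbf{d}})$ exceeds $1/10$, then as in (\ref{E10}) I extract $q\in\mathcal{Q}_{10}$ realizing this deviation at every $q(i)$ and, via the $*$-mixing law of large numbers (Theorem 2 in \cite{BHK}) applied to $\{1_{E_{\mathbf{d},q(i)}}\}$, conclude $\nu(\Gamma_{q,\mathbf{d}}^{\epsilon/2})=1$. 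Otherwise all six far-events have upper density at most $1/10$, so the indices at which all three frequencies lie within $\epsilon$ of their $\mu_{T}$-values have lower density exceeding $1/10$; extracting $q\in\mathcal{Q}_{10}$ as in (\ref{E6}) and using the $k$-step Markov identity
\[
p_{\mathbf{ba}c,q(i)}=\frac{p_{\mathbf{ba},q(i)}\cdot p_{\mathbf{a}c,q(i)+m}}{p_{\mathbf{a},q(i)+m}}
\]
together with (\ref{E5}) and the definition of $\delta$ gives $\nu(\Gamma_{q,\mathbf{ba}c}^{\delta/4})=1$. In either branch Theorem \ref{T6.1} bounds $\dim_{H}(\nu)$ by $\max\{1-c_{f,10,\epsilon/2},\,1-c_{f,10,\delta/4}\}$.

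In the stationary and ergodic case the same skeleton works, but I replace the $*$-mixing law of large numbers by Birkhoff's ergodic theorem for the ergodic measure-preserving system $(X,T,\nu)$ and take $q=\mathrm{id}\in\mathcal{Q}_{2}$. By stationarity $p_{\mathbf{d},i}=\nu(I_{\mathbf{d}})$ is constant in $i$, so the dichotomy reduces to comparing the three marginals $\nu(I_{\mathbf{a}}),\nu(I_{\mathbf{ba}}),\nu(I_{\mathbf{a}c})$ with their $\mu_{T}$-values. If one is at distance more than $\epsilon$ from the corresponding $\mu_{T}(I_{\mathbf{d}})$, then the Birkhoff average $\frac{1}{n}\sum_{i=1}^{n}\mathbb{I}_{\mathbf{d}}(T^{i}x)$ converges $\nu$-a.e.\ to $\nu(I_{\mathbf{d}})$ and hence $\nu$-a.e.\ $x\in\Gamma_{\mathrm{id},\mathbf{d}}^{\epsilon/2}$. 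If all three lie within $\epsilon$, the stationary $k$-step Markov relation $\nu(I_{\mathbf{ba}c})=\nu(I_{\mathbf{ba}})\nu(I_{\mathbf{a}c})/\nu(I_{\mathbf{a}})$, together with (\ref{E5}) and the definition of $\delta$, yields $|\nu(I_{\mathbf{ba}c})-\mu_{T}(I_{\mathbf{ba}c})|\ge\delta/2$, so $\nu$-a.e.\ $x\in\Gamma_{\mathrm{id},\mathbf{ba}c}^{\delta/4}$; since the Birkhoff limit exists, the $\liminf$ in the definition of $\Gamma$ faithfully records these deviations. Theorem \ref{T6.1} (equivalently, the $f$-expansion analogue of Theorem 2.1 in \cite{KPW}) again bounds $\dim_{H}(\nu)$ away from $1$. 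Setting $c_{f,k}$ to be the least of the finitely many constants arising in the two cases finishes the proof.

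I expect the only genuine subtlety, just as in Theorem \ref{T3}, to be the uniformity of $c_{f,k}$: it must not depend on the individual chain $\nu$. This is secured by fixing the witness $\delta$ and the tolerance $\epsilon$ once and for all from $\mu_{T}$ and $k$ \emph{before} $\nu$ is introduced, and by arranging the dichotomy so that every admissible $\nu$ is driven into one of two families of sets whose Hausdorff dimension Theorem \ref{T6.1} bounds uniformly. The rest is routine verification carried out already in Section \ref{S4}: that Lemma \ref{L6.3} indeed produces the cylinder inequality displayed above, and that $\{1_{E_{\mathbf{d},q(i)}}\}$ inherits the $*$-mixing property of $\{A_{n}\}$.
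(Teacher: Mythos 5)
Your proposal is correct and follows the route the paper itself takes: the paper gives no detailed proof of this theorem, stating only that it "can be derived from Theorem \ref{T6.1}, Theorem 2.1 in \cite{KPW}, and Lemma \ref{L6.3}, by an argument similar to the one given in the proof of Theorem \ref{T3}," and your write-up is precisely that argument fleshed out, with the witness $\delta$ and tolerance $\epsilon$ fixed from $\mu_{T}$ before $\nu$ enters, the density dichotomy, and the $*$-mixing law of large numbers all carried over verbatim. The only (harmless) divergence is in the stationary--ergodic branch, where the paper points to Theorem 2.1 of \cite{KPW}, whereas you instead apply Theorem \ref{T6.1} with $q=\mathrm{id}\in\mathcal{Q}_{2}$ together with Birkhoff's theorem; this works because Birkhoff gives genuine limits, so the $\liminf$ defining $\Gamma_{q,\mathbf{a}}^{\delta}$ records the deviation $|\nu(I_{\mathbf{d}})-\mu_{T}(I_{\mathbf{d}})|$ exactly.
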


\end{document}